\documentclass{amsart}
\usepackage{amssymb}
\usepackage{mathrsfs}
\usepackage[all]{xy}

\newcommand{\bF}{\mathbb{F}}

\newcommand{\cD}{\mathcal{D}}
\newcommand{\Z}{\mathbb{Z}}
\newcommand{\C}{\mathbb{C}}

\newcommand{\Qlb}{\bar{\mathbb{Q}}_\ell}

\newcommand{\Db}{D^{\mathrm{b}}}
\newcommand{\Cb}{C^{\mathrm{b}}}
\newcommand{\Kb}{K^{\mathrm{b}}}
\newcommand{\Tilt}{\mathrm{Tilt}}
\newcommand{\Proj}{\mathrm{Proj}}
\newcommand{\real}{\mathrm{real}}

\newcommand{\Perv}{\mathrm{Perv}}
\newcommand{\IC}{\mathbf{IC}}
\newcommand{\Ext}{\mathrm{Ext}}
\newcommand{\Hom}{\mathrm{Hom}}

\newcommand{\cN}{\mathcal{N}}
\newcommand{\fg}{\mathfrak{g}}
\newcommand{\fz}{\mathfrak{z}}
\newcommand{\fp}{\mathfrak{p}}
\newcommand{\ft}{\mathfrak{t}}
\newcommand{\fl}{\mathfrak{l}}
\newcommand{\fu}{\mathfrak{u}}

\newcommand{\cW}{\mathcal{W}}

\newcommand{\uF}{\underline{\mathbb{F}}}

\newcommand{\cR}{\mathscr{R}}
\newcommand{\cRp}{\:{}^\backprime\mathscr{R}}
\newcommand{\bT}{\mathbb{T}}
\newcommand{\bTp}{\:{}^\backprime\mathbb{T}}
\newcommand{\D}{\mathbb{D}}
\newcommand{\id}{\mathrm{id}}
\newcommand{\cK}{\mathcal{K}}
\newcommand{\pH}{{}^p\! H}
\newcommand{\cE}{\mathcal{E}}

\newcommand{\cT}{\mathcal{T}}
\newcommand{\cP}{\mathcal{P}}

\newcommand{\cO}{\mathcal{O}}
\newcommand{\cC}{\mathcal{C}}
\newcommand{\cL}{\mathcal{L}}

\def\e{\varepsilon}

\newcommand{\ul}{\underline}

\def\l{\lambda}
\def\d{\delta}

\newcommand{\Spr}{{\mathbf{Spr}}}

\DeclareMathOperator{\res}{res}
\newcommand{\resp}{\mathop{{}^\backprime\mathrm{res}}\nolimits}
\DeclareMathOperator{\ind}{ind}
\newcommand{\con}{\mathrm{con}}
\newcommand{\reg}{\mathrm{reg}}
\newcommand{\sub}{\mathrm{sub}}
\newcommand{\midd}{\mathrm{mid}}
\newcommand{\simto}{\overset{\sim}{\to}}
\newcommand{\Irr}{\mathrm{Irr}}
\newcommand{\End}{\mathrm{End}}
\newcommand{\Cent}{\mathrm{Z}}
\newcommand{\Cento}{\mathrm{Z}^\circ}

\newcommand{\indLG}{\ind_L^G}
\newcommand{\resLG}{\res_L^G}
\newcommand{\respLG}{\resp_L^G}
\newcommand{\indLGl}{\ind_{L_\lambda}^G}

\numberwithin{equation}{section}
\newtheorem{thm}{Theorem}[section]
\newtheorem{lem}[thm]{Lemma}
\newtheorem{prop}[thm]{Proposition}
\newtheorem{cor}[thm]{Corollary}
\newtheorem{conj}[thm]{Conjecture}
\theoremstyle{definition}
\newtheorem{defn}[thm]{Definition}
\theoremstyle{remark}
\newtheorem{rmk}[thm]{Remark}

\title[Nilpotent cones, Fourier transform, geometric Ringel duality]{Sheaves on nilpotent cones, Fourier transform, and a geometric Ringel duality}

\author{Pramod N. Achar}
\address{Department of Mathematics\\
  Louisiana State University\\
  Baton Rouge, LA \ 70803\\
  U.S.A.}
\email{pramod@math.lsu.edu}

\author{Carl Mautner}
\address{Department of Mathematics\\
  FAS\\
  Harvard University\\
  One Oxford Street\\
  Cambridge, MA \ 02138\\
  U.S.A.}
\email{cmautner@math.harvard.edu}

\subjclass[2000]{Primary 17B08, 14F05; Secondary 20G43}
\thanks{The first author was supported by NSF Grant No.~DMS-1001594 and the second author was supported by an NSF postdoctoral research fellowship.}

\begin{document}

\begin{abstract}
Given the nilpotent cone of a complex reductive Lie algebra, we consider its equivariant constructible derived category of sheaves with coefficients in an arbitrary field.  This category and its subcategory of perverse sheaves play an important role in Springer theory and the theory of character sheaves.  We show that the composition of the Fourier--Sato transform on the Lie algebra followed by restriction to the nilpotent cone restricts to an autoequivalence of the derived category of the nilpotent cone.  In the case of $GL_n$, we show that this autoequivalence can be regarded as a geometric version of Ringel duality for the Schur algebra.
\end{abstract}

\maketitle

\section{Introduction}

Let $G$ be a connected complex reductive group, and let $\cN$ be the variety of nilpotent elements in its Lie algebra $\fg$.  Let $\bF$ be a field.  The role of Fourier transforms in the study of perverse $\bF$-sheaves on $\fg$ or on $\cN$ is well-established: for $\bF = \Qlb$ or $\C$, see~\cite{hk,lus:ftsla,mirk}, and for $\bF$ of positive characteristic, see~\cite{juteau}.  Much of the literature implicitly considers the functor
\[
\cR: \cD_G(\cN,\bF) \to \cD_G(\cN,\bF)
\]
given by Fourier transform on $\fg$ followed by restriction to $\cN$ (and a suitable shift; see~\eqref{eqn:cr-defn}).    When $\bF = \C$, in the context of the Springer correspondence, $\cR$ corresponds to tensoring by the sign character of the Weyl group.  In the present paper, we show that $\cR$ is actually an autoequivalence for arbitrary $\bF$.

In addition, when $G = GL_n$, we show that $\cR$ is a geometric version of Ringel duality.  More precisely: the second author has shown~\cite{mau} that the abelian category of perverse sheaves $\Perv_G(\cN,\bF)$ for $G = GL_n$ is equivalent to the category $S_\bF(n,n)\textup{-mod}$ of finitely-generated modules for the Schur algebra $S_\bF(n,n)$ over $\bF$.  Donkin has shown~\cite[Proposition 3.7]{do} that the Schur algebra is Ringel self-dual; this means that there is an autoequivalence of the derived category $\Db (S_\bF(n,n)\textup{-mod})$ that sends tilting modules to projective modules.  The second main result of the present paper asserts that for $G = GL_n$, $\cR$ has a similar property: it sends tilting perverse sheaves to projective perverse sheaves.

Recall that the category $\cD_G(\cN_G,\bF)$ is \emph{not} equivalent to the derived category $\Db\Perv_G(\cN_G,\bF)$, so the functor $\cR$ is not the usual (``algebraic'') Ringel duality of derived categories of abelian categories.  Nevertheless, we will see in Section~\ref{sect:ringel} that $\cR$ does give rise to an algebraic Ringel duality functor, and that the two are compatible in a suitable sense.

Indeed, we obtain in this way a new proof of Donkin's result.  It is perhaps interesting to note the difference in perspective: in~\cite{do}, the focus is on the structure of the endomorphism rings of a tilting generator $\cT$ and a projective generator $\cP$, and the construction of an explicit isomorphism  between them.  On the other hand, in the present paper, by starting with the geometric autoequivalence $\cR$, we immediately obtain an isomorphism $\End(\cT) \simto \End(\cR(\cT))$; the focus is then on showing that $\cR(\cT)$ is a projective generator.  This approach avoids explicit descriptions of endomorphism rings.

The paper is organized as follows.  After some general preliminaries in Section~\ref{sect:prelim}, we review parabolic induction and restriction functors for perverse sheaves in Section~\ref{sect:resind}.  The fact that $\cR$ is an autoequivalence is proved in Section~\ref{sect:autoequiv}, and the relationship with Ringel duality in the case of $G = GL_n$ is studied in Section~\ref{sect:ringel}.  Lastly, Section~\ref{sect:behavior} contains a (partly conjectural) description of the functor in the case $\bF = \C$ and some observations regarding the behavior of $\cR$ when $G$ is of type $B_2$ or $G_2$.

\subsection*{Acknowledgments}

The authors would like to express their gratitude to A.~Henderson, K.~McGerty, I.~Mir\-kovi\'c, and S.~Riche for helpful comments on a previous version of this paper.

\section{Preliminaries}
\label{sect:prelim}

\subsection{Varieties and sheaves}

In this paper, varieties are generally assumed to be over $\C$ and equipped with the strong topology.  If $X$ is a variety acted on by an algebraic group $H$, we write $\cD_H(X,\bF)$ for the bounded $H$-equivariant constructible derived category of $X$ of sheaves of $\bF$-vector spaces, in the sense of~\cite{bl}.  Given a closed subgroup $K \subset H$, there is a functor (called the ``integration functor'')
\[
\Gamma_K^H: \cD_K(X,\bF) \to \cD_H(X,\bF)
\]
that is right adjoint to the forgetful functor $\cD_H(X,\bF) \to \cD_K(X,\bF)$. (See~\cite[Theorem~3.7.1]{bl}, where this functor is denoted $\mathrm{Ind}_*$.)  

Let $\D : \cD_H(X,\bF) \to \cD_H(X,\bF)$ denote the Verdier duality functor, and let $\Perv_H(X,\bF) \subset \cD_H(X,\bF)$ be the abelian category of $H$-equivariant perverse sheaves.  We also have perverse cohomology functors $\pH^i: \cD_H(X,\bF) \to \Perv_H(X,\bF)$ for each $i \in \Z$.  For $M, N \in \Perv_H(X,\bF)$, there is a natural isomorphism
\[
\Ext^1_{\Perv_H(X,\bF)}(M,N) \cong \Hom_{\cD_H(X,\bF)}(M, N[1]).
\]
This fact will be used several times in the sequel.

We write $\uF_X$, or simply $\uF$, for the constant sheaf of value $\bF$ on $X$.  Next, let $j_C: C \hookrightarrow X$ be the inclusion of an $H$-stable subset, and let $E$ be an irreducible $H$-equivariant local system on $C$. In addition to the simple perverse sheaf $\IC(C,E) = (j_C)_{!*}(E[\dim C])$, we will sometimes consider the perverse sheaves
\[
\Delta(C,E) = \pH^0(j_{C!}E[\dim C])
\qquad\text{and}\qquad
\nabla(C,E) = \pH^0(j_{C*}E[\dim C]),
\]
often called \emph{standard} and \emph{costandard} perverse sheaves, respectively.

\subsection{Vector spaces and conic sheaves}

Let $V$ be a complex vector space acted on linearly by the group $H$.  We say that an object $M \in \cD_H(V,\bF)$ is \emph{conic} if for each nonzero vector $v \in V$ and each $i \in \Z$, the sheaf $H^i(M)|_{\C\cdot v \smallsetminus \{0\}}$ on $\C\cdot v \smallsetminus \{0\}$ is locally constant.  In other words, $M$ is conic if it is constructible with respect to the obvious scaling action of $\C^\times$ on $V$.  Let $\cD_{H,\con}(V,\bF) \subset \cD_H(V,\bF)$ denote the full subcategory of conic objects.

Let $\delta_V$ denote the skyscraper sheaf at the origin of $V$ (with value $\bF$).  It is clear that $\delta_V$ is conic.

\subsection{Fourier--Sato transform}

Let $V$ be as above, and let $V^*$ denote the dual vector space.  In this paper, we adopt the convention that the \emph{Fourier--Sato transform} for $V$, denoted
\[
\bT_V: \cD_{H,\con}(V,\bF) \to \cD_{H,\con}(V^*,\bF),
\]
is defined by composing the functor defined in~\cite[\S 3.7]{ks} with the shift $[\dim V]$.  With this modification, $\bT_V$ is $t$-exact for the perverse $t$-structure~\cite[Proposition~10.3.18]{ks}.  This functor is an equivalence of categories; its inverse is denoted
\[
\bTp_V: \cD_{H,\con}(V^*,\bF) \to \cD_{H,\con}(V,\bF).
\]
We will need the following key properties of $\bT_V$ (see~\cite[\S 3.7]{ks}):
\begin{enumerate}
\item $\bT_V(\delta_V) \cong \uF_{V^*}[\dim V^*]$.
\item If $V = V_1 \times V_2$, then $\bT_V(M_1 \boxtimes M_2) \cong \bT_{V_1}(M_1) \boxtimes \bT_{V_2}(M_2)$.
\end{enumerate}

\subsection{Highest-weight categories and tilting}
\label{subsect:hwc}

The notion of \emph{highest weight category}, due to Cline, Parshall, and Scott \cite{cps}, involves the following data:
\begin{enumerate}
\item a noetherian and artinian $\bF$-linear abelian category $\cC$ with enough projectives and injectives
\item a partially ordered set $(I, \le)$ that indexes a set $\{ L_i \mid i \in I\}$ of representatives of the isomorphism classes of simple objects in $\cC$
\item for each $i \in I$, a specified \emph{standard} object $\Delta_i$ and a \emph{costandard} object $\nabla_i$.
\end{enumerate}
The standard and costandard objects are required to satisfy various additional properties whose details we omit.  In this paper, we assume that $I$ is finite.

An object is said to be \emph{tilting} if it has both a filtration by standard objects and a filtration by costandard objects.  The indecomposable tilting objects are again indexed by $I$: say $\{T_i \mid i \in I\}$.

Suppose now that $\cC'$ is another highest weight category, with simple objects $\{L'_i \mid i \in I \}$ indexed by $(I, \le^{\mathrm{op}})$ (that is, $I$ with the opposite partial order).  Let $P'_i$ be a projective cover of $L'_i$ in $\cC'$.  Then $\cC'$ is said to be \emph{Ringel dual} to $\cC$ if there is an isomorphism
\begin{equation}\label{eqn:ringel-isom}
\vartheta: \End\Big(\bigoplus_{i \in I} T_i\Big) \simto \End\Big(\bigoplus_{i \in I} P'_i\Big).
\end{equation}
Ringel duals always exist~\cite{ringel}.  Each isomorphism as in~\eqref{eqn:ringel-isom} gives rise to a derived equivalence $H_\vartheta: \Db\cC \simto \Db\cC'$.  More specifically, let $\Tilt(\cC)$ (resp.~$\Proj(\cC')$) be the additive category of tilting objects in $\cC$ (resp.~projective objects in $\cC'$).  Then $\vartheta$ induces an equivalence of additive categories
\begin{equation}\label{eqn:ringel-add}
H_\vartheta: \Tilt(\cC) \to \Proj(\cC'),
\end{equation}
since both are naturally equivalent to the category of projective modules over the ring in~\eqref{eqn:ringel-isom}.  This extends to an equivalence of homotopy categories $H_\vartheta: \Kb\Tilt(\cC) \to \Kb\Proj(\cC')$.  Finally, using the natural equivalences
\begin{equation}\label{eqn:tiltproj}
\Kb\Tilt(\cC) \cong \Db\cC
\qquad\text{and}\qquad
\Kb\Proj(\cC') \cong \Db\cC'
\end{equation}
(see~\cite[Lemmas~1.1 and~1.5]{happel} and~\cite[Theorem~4.3]{ps}), we obtain a derived equivalence $H_\vartheta: \Db\cC \simto \Db\cC'$.

\begin{rmk}\label{rmk:etale}
Instead of working with complex reductive groups in the strong topology and Fourier--Sato transform, we could work with reductive groups over an algebraically closed field of characteristic $p > 0$ in the \'etale topology, and use the Fourier--Deligne transform (see~\cite{laumon} for $\bF= \Qlb$; in the modular case, the relevant theory is developed in~\cite{juteau}).  Note that $\bF$ cannot be completely arbitrary in this case: it must have characteristic different from $p$, and it is required to contain the $p$-th roots of unity.  The main arguments of the paper apply unchanged in this setting; in particular, analogues of Theorems~\ref{thm:main} and~\ref{thm:ringel} are still true.
\end{rmk}

\section{Restriction and induction}
\label{sect:resind}

Recall that $G$ denotes a connected complex reductive group, $\fg$ its Lie algebra, and $\cN_G$ its nilpotent cone.  We fix, once and for all, a Borel subgroup $B \subset G$ and a maximal torus $T \subset B$.

Let $P$ be a parabolic subgroup of $G$ containing $B$, and let $L \subset P$ be the Levi factor containing $T$.  Their Lie algebras will be denoted $\fp$ and $\fl$, respectively.  Let $\cN_L$ be the nilpotent cone in $\fl$.  If we let $P$ act on $\fl$ through the quotient map $P \twoheadrightarrow L$, then, by~\cite[Theorem~3.7.3]{bl}, the forgetful functor
\[
\cD_P(\fl,\bF) \to \cD_L(\fl,\bF)
\]
is an equivalence of categories.  We will henceforth suppress mentions of this equivalence, and silently pass between the two.  For instance, consider the diagam
\[
\xymatrix{ \fg & \fp \ar[l]_-{m} \ar[r]^-{p} & \fl },
\]
where $m: \fp \to \fg$ is the inclusion map, and $p: \fp \to \fl$ is the projection map.  We define two functors
\[
\resLG, \respLG: \cD_G(\fg,\bF) \to \cD_L(\fl,\bF)
\qquad\text{by}\qquad
\resLG = p_!m^*, \qquad \respLG = p_*m^!.
\]
and
\[
\indLG: \cD_L(\fl,\bF) \to \cD_G(\fg,\bF)
\qquad\text{by}\qquad
\indLG = \Gamma_P^G m_* p^!.
\]

It is easy to see that if $M \in \cD_G(\fg,\bF)$ is supported on the nilpotent cone $\cN_G$, then $\resLG M$ and $\respLG M$ are supported on $\cN_L$.  Similarly, $\indLG$ takes objects supported on $\cN_L$ to objects supported on $\cN_G$.  Thus, if we let
\[
i_L: \cN_L \to \fl
\qquad\text{and}\qquad
i_G: \cN_G \to \fg
\]
be the inclusion maps, we see that there are functors (unique up to isomorphism)
\[
\resLG, \respLG: \cD_G(\cN_G,\bF) \to \cD_L(\cN_L,\bF),
\qquad
\indLG: \cD_L(\cN_L,\bF) \to \cD_G(\cN_G,\bF)
\]
satisfying
\begin{equation}\label{eqn:ipush-comm}
i_{L*} \resLG \cong \resLG i_{G*}, \qquad
i_{L*} \respLG \cong \respLG i_{G*}, \qquad
i_{G*} \indLG \cong \indLG i_{L*}.
\end{equation}
It is immediate from the definitions that
\begin{equation}\label{eqn:res-duality}
\D \circ \resLG \cong \respLG \circ \D.
\end{equation}
The following proposition collects some additional properties of these functors that are well known when $\bF = \C$.  For arbitrary $\bF$, related results appear in~\cite[\S 4]{ahr}.

\begin{prop} \label{prop:t-exact}
\begin{enumerate}
\item The functor $\indLG$ commutes with $\D$.\label{it:verdier}
\item The functor $\indLG$ is right adjoint to $\resLG$ and left adjoint to $\respLG$.\label{it:iradjoint}
\item The functors $\resLG$, $\respLG$, and $\indLG$ are $t$-exact for the perverse $t$-structures on $\cD_G(\cN_G,\bF)$ and $\cD_L(\cN_L,\bF)$.\label{it:irexact}
\end{enumerate}
\end{prop}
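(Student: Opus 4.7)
I would prove the three parts in the order (1), (2), (3), using each as input for the next; the common geometric ingredient is that $G/P$ is projective.

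For part (1), the key step is to rewrite $\indLG$ in a form where Verdier duality can be controlled. Under the canonical equivalence $\cD_P(\fg) \cong \cD_G(G\times^P \fg)$, the composite $\Gamma_P^G m_*$ identifies with pushforward along the ``Grothendieck alteration'' $\mu: G\times^P \fp \to \fg$, $(g,x)\mapsto \mathrm{Ad}(g)x$, which is proper since $G/P$ is projective; similarly, $p^!$ identifies with $\tilde p^!$ for the smooth vector bundle $\tilde p: G\times^P\fp \to G\times^P\fl$ of relative dimension $\dim\fu$. Thus $\indLG \cong \mu_*\tilde p^!$. Properness of $\mu$ gives $\D\mu_* \cong \mu_*\D$, while smoothness of $\tilde p$ gives $\D\tilde p^! \cong \tilde p^*\D \cong \tilde p^!\D[-2\dim\fu]$. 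The apparent shift of $[-2\dim\fu]$ is canceled by a dual shift of $[+2\dim\fu]$ arising from the fact that the equivalence $\cD_G(G\times^P\fl)\cong\cD_L(\fl)$ (given by pullback along a codimension-$\dim\fu$ section) does not intertwine the two Verdier dualities on the nose, but rather up to this shift. Combining gives $\D\indLG \cong \indLG\D$.

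For part (2), the adjunction $(\resLG,\indLG)$ follows by stacking the standard adjoint pairs $(p_!,p^!)$, $(m^*,m_*)$, and (forgetful, $\Gamma_P^G$):
\[
\Hom(\resLG X, Y) \cong \Hom(m^*X, p^!Y) \cong \Hom(X, m_*p^!Y) \cong \Hom(X, \indLG Y).
\]
The adjunction $(\indLG,\respLG)$ then follows formally by applying $\D$ throughout, using part (1) together with~\eqref{eqn:res-duality}: $\Hom(\indLG Y, X) \cong \Hom(\D X, \D\indLG Y) \cong \Hom(\D X, \indLG\D Y) \cong \Hom(\resLG\D X, \D Y) \cong \Hom(\D\respLG X, \D Y) \cong \Hom(Y, \respLG X)$.

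For part (3), the plan is to first establish $t$-exactness of $\resLG$ on $\cN_G$-supported sheaves, deduce $t$-exactness of $\respLG$ from~\eqref{eqn:res-duality}, and then conclude $t$-exactness of $\indLG$ from the adjunction sandwich: as a right adjoint to the $t$-exact $\resLG$ it is left $t$-exact, and as a left adjoint to the $t$-exact $\respLG$ it is right $t$-exact. For $\resLG = p_!m^*$, I would invoke Braden's hyperbolic localization for a cocharacter $\lambda:\C^\times\to T$ whose attractor in $\fg$ is precisely $\fp$ and whose fixed locus is $\fl$; this identifies $\resLG \cong \respLG$ on the $G$-equivariant (hence $\C^\times$-conic) category, and a dimension count on nilpotent orbits---morally the classical semi-smallness of the restricted alteration $\mu^{-1}(\cN_G)\to\cN_G$---establishes that this common functor is $t$-exact on perverse sheaves supported on the nilpotent cone. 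The main obstacle lies here: both the hyperbolic localization argument and the semi-smallness dimension count must be verified uniformly over arbitrary coefficient field $\bF$, but both are known to admit characteristic-free formulations, so this should go through without substantial new input.
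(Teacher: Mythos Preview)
Your argument is correct and essentially the same as the paper's. For (1), the paper factors $\indLG = s_*Qm_*p^!$ with $s: G\times^P\fg\to\fg$ and $Q$ the induction equivalence, rather than your $\mu_*\tilde p^!$, but both locate the cancelling $[\pm 2\dim G/P]$ shifts in the same two places (smoothness of $p$ versus~\cite[Proposition~7.6.2]{bl} for the induction equivalence); part (2) is identical; and for (3) the paper simply cites~\cite[Proposition~4.7]{ahr} for $t$-exactness of $\respLG$ and then runs the same duality-plus-adjunction-sandwich argument you give, so your sketch just makes explicit what the paper outsources.

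One small imprecision worth noting: Braden's theorem identifies $\resLG$ (computed via $P$) with the functor $\bar p_*\bar m^!$ computed via the \emph{opposite} parabolic $\bar P$, not with $\respLG = p_*m^!$ via $P$ itself---this is exactly how the paper uses~\cite{bra} in the proof of Lemma~\ref{lem:ires-comm}. This does not affect your $t$-exactness conclusion, since the target category is $\cD_L(\cN_L,\bF)$ regardless of which parabolic with Levi $L$ is used.
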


\begin{proof}
\eqref{it:verdier}~Let $\nu: \cD_P(\fg) \to \cD_G(G \times^P \fg)$ be the ``induction equivalence'' \cite[Theorem~2.6.3]{bl}, and let $s: G \times^P \fg \to \fg$ be the map $s(g,x) = g \cdot x$.  By definition, on $\fg$, we have $\Gamma_P^G = s_*Q$, so $\indLG = s_*Qm_*p^!$.  Since $s$ and $m$ are proper, $s_*$ and $m_*$ commute with $\D$.  Since $p$ is smooth of relative dimension $\dim \fp - \dim \fl = \dim G/P$, we have $\D \circ p^!\circ \D \cong p^![-2\dim G/P]$.  Finally, $\D \circ Q \circ \D \cong Q[2\dim G/P]$ by \cite[Proposition~7.6.2]{bl}.  Thus, $\indLG$ commutes with $\D$.

\eqref{it:iradjoint}~The first assertion is obvious, and the second follows from~\eqref{eqn:res-duality} and part~\eqref{it:verdier}.

\eqref{it:irexact}~The result for $\respLG$ is in~\cite[Proposition~4.7]{ahr}, and by~\eqref{eqn:res-duality}, $\resLG$ is $t$-exact as well.  Lastly, $\indLG$ is $t$-exact because it has $t$-exact adjoints on both sides.
\end{proof}

\begin{defn}\label{defn:cuspidal}
A simple perverse sheaf $M \in \Perv_G(\cN_G,\bF)$ is said to be \emph{cuspidal} if $\respLG M = 0$ for all Levi subgroups $L \subsetneq G$.
\end{defn}

It can be shown that this is equivalent to requiring that $\resLG M = 0$ for all $L \subsetneq G$, using for instance,~\cite[Theorem~4.7]{mirk} or~\cite[Theorem~12]{lus:ftsla}.  We will not require this fact.

\begin{lem}\label{lem:induced}
Let $M \in \Perv_G(\cN_G,\bF)$ be a simple perverse sheaf.  If $M$ is not cuspidal, then $M$ is a quotient of some induced perverse sheaf.
\end{lem}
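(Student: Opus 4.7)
The plan is to reduce the statement to a direct adjunction argument using the properties of $\respLG$ and $\indLG$ collected in Proposition~\ref{prop:t-exact}. Since $M$ is not cuspidal, by Definition~\ref{defn:cuspidal} there exists a proper Levi subgroup $L \subsetneq G$ such that $\respLG M \neq 0$. By Proposition~\ref{prop:t-exact}\eqref{it:irexact}, the functor $\respLG$ is $t$-exact, so $\respLG M$ is a nonzero object of $\Perv_L(\cN_L, \bF)$.

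The next step is to extract a simple subobject from $\respLG M$. The category $\Perv_L(\cN_L, \bF)$ has finite length (the $L$-orbits in $\cN_L$ are finite in number, and the equivariant local systems on each orbit correspond to finite-dimensional modules over the finite group algebra of a component group of a stabilizer). Hence we may choose a simple perverse sheaf $N \in \Perv_L(\cN_L, \bF)$ together with a nonzero inclusion $N \hookrightarrow \respLG M$.

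By Proposition~\ref{prop:t-exact}\eqref{it:iradjoint}, the functor $\indLG$ is left adjoint to $\respLG$, so the adjunction isomorphism
\[
\Hom_{\Perv_G(\cN_G,\bF)}(\indLG N, M) \;\cong\; \Hom_{\Perv_L(\cN_L,\bF)}(N, \respLG M)
\]
produces a nonzero morphism $\varphi : \indLG N \to M$. By Proposition~\ref{prop:t-exact}\eqref{it:irexact}, $\indLG N$ lies in $\Perv_G(\cN_G, \bF)$, so $\varphi$ is a morphism of perverse sheaves. Since $M$ is simple, the image of $\varphi$ is a nonzero subobject of $M$ and therefore equals $M$, exhibiting $M$ as a quotient of the induced perverse sheaf $\indLG N$.

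There is no real obstacle here once Proposition~\ref{prop:t-exact} is in hand; the only point requiring a moment of care is the existence of a simple subobject of $\respLG M$, which is ensured by the finite-length property of the target category.
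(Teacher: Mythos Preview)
Your proof is correct and follows essentially the same adjunction argument as the paper: use that $\indLG$ is left adjoint to $\respLG$ to produce a nonzero map from an induced perverse sheaf to $M$, then invoke simplicity of $M$. The paper's version is marginally more direct in that it skips the step of extracting a simple subobject and simply applies the counit $\indLG \respLG M \to M$, which is automatically nonzero since it corresponds under adjunction to the identity on $\respLG M \neq 0$.
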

\begin{proof}
Let $\fl \subsetneq \fg$ be a Levi subalgebra such that $\respLG M \ne 0$, and let $N = \indLG \respLG M$.  By adjunction, we have a natural nonzero map $N \to M$.  Since $M$ is simple, this map is surjective.
\end{proof}

\begin{lem}\label{lem:ires-comm}
The restriction and induction functors commute with $i^*$ and $i^!$:
\begin{align*}
i_L^* \resLG &\cong \resLG i_G^*, &
i_L^* \respLG &\cong \respLG i_G^*, &
i_G^* \indLG &\cong \indLG i_L^*, \\
i_L^! \resLG &\cong \resLG i_G^!, &
i_L^! \respLG &\cong \respLG i_G^!, &
i_G^! \indLG &\cong \indLG i_L^!.
\end{align*}
\end{lem}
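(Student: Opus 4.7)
The six identities form three Verdier-dual pairs: applying $\D$ and using $\D\circ\resLG\cong\respLG\circ\D$, $\D\circ\indLG\cong\indLG\circ\D$ from Proposition~\ref{prop:t-exact} together with the standard $\D\circ i^*\cong i^!\circ\D$, each of the ``$i^*$''-identities is converted into its ``$i^!$''-counterpart. The plan is to prove directly only $i_L^*\resLG\cong\resLG i_G^*$, $i_L^!\respLG\cong\respLG i_G^!$, and $i_G^*\indLG\cong\indLG i_L^*$, and then to recover the remaining three by Verdier duality. These three are chosen because each reduces to a single application of one of the universally-valid base-change formulas $f^*g_!\cong g'_!f'^*$ or $f^!g_*\cong g'_*f'^!$, applied to one of the cartesian squares
\[
\xymatrix@C=3em{\cN_P \ar[r]^{i_P} \ar[d]_{\bar p} & \fp \ar[d]^{p} \\ \cN_L \ar[r]^{i_L} & \fl} \qquad\text{and}\qquad \xymatrix@C=3em{\cN_P \ar[r]^{\bar m} \ar[d]_{i_P} & \cN_G \ar[d]^{i_G} \\ \fp \ar[r]^{m} & \fg}.
\]

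For $i_L^*\resLG\cong\resLG i_G^*$, proper base change $i_L^*p_!\cong\bar p_!i_P^*$ applied to $\resLG=p_!m^*$, followed by the cartesian relation $m\circ i_P=i_G\circ\bar m$, yields $i_L^*\resLG\cF\cong\bar p_!\bar m^*i_G^*\cF$; applying the same string of manipulations to $\resLG i_{G*}$ shows this equals $\resLG i_G^*\cF$. The dual identity $i_L^!\respLG\cong\respLG i_G^!$ is entirely parallel: with $\respLG=p_*m^!$, the general base change $i_L^!p_*\cong\bar p_*i_P^!$ combined with the functoriality identity $i_P^!m^!=(m\circ i_P)^!=(i_G\circ\bar m)^!=\bar m^!i_G^!$ finishes the computation in two steps.

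The induction identity $i_G^*\indLG\cong\indLG i_L^*$ needs one additional ingredient: that $i_G^*$ commutes with the integration functor $\Gamma_P^G$. This is a formal consequence of adjunction, since the forgetful functor $\cD_G\to\cD_P$ commutes with $i_G^*$ (as $i_G$ is $G$-equivariant) and $\Gamma_P^G$ is its right adjoint. Writing $\indLG=\Gamma_P^G m_*p^!$, one commutes $i_G^*$ past $\Gamma_P^G$, applies proper base change $i_G^*m_*\cong\bar m_*i_P^*$ (valid because the closed immersion $m$ is proper), and is left with the commutation $i_P^*p^!\cong\bar p^!i_L^*$; this follows from the smoothness of $p:\fp=\fl\oplus\fu\to\fl$ (a trivial vector bundle), which gives $p^!\cong p^*[2\dim\fu]$ and likewise for $\bar p$, so that $i_P^*p^!\cong\bar p^*i_L^*[2\dim\fu]\cong\bar p^!i_L^*$ by pure $*$-functoriality. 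The main subtle point is organizational: a direct attack on, e.g., $i_L^*\respLG\cong\respLG i_G^*$ would require commuting the mixed pullbacks $i_P^*$ and $m^!$ across the second cartesian square, which is not a base-change formula and would fail in general because $\cN_G$ is singular, so $\bar m^!$ has no simple description. Choosing the variance-compatible triple above sidesteps this obstacle entirely, and Verdier duality then delivers the remaining identities.
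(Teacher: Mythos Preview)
There is a genuine gap in your reduction strategy. You correctly note that the six identities fall into three Verdier-dual pairs, but the three identities you choose to prove directly do \emph{not} represent three distinct pairs. Since $\D\circ\resLG\cong\respLG\circ\D$ and $\D\circ i^*\cong i^!\circ\D$, the identity $i_L^*\resLG\cong\resLG i_G^*$ is Verdier dual to $i_L^!\respLG\cong\respLG i_G^!$: these are two halves of the \emph{same} pair. Your three direct proofs therefore cover only two of the three pairs, and the pair
\[
i_L^!\resLG\cong\resLG i_G^!
\qquad\text{and}\qquad
i_L^*\respLG\cong\respLG i_G^*
\]
is never established. You even diagnose at the end why a direct base-change attack on $i_L^*\respLG\cong\respLG i_G^*$ fails (it would require commuting $i_P^*$ past $m^!$ across a square where the target $\cN_G$ is singular), but the Verdier-duality shortcut you invoke for it does not exist. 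The paper handles exactly this pair by a genuinely different idea: Braden's hyperbolic localization theorem~\cite{bra} gives $\resLG\cong\bar p_*\bar m^!$ computed via the \emph{opposite} parabolic, and in that presentation the $i^!$-commutation becomes an honest base-change. This is the missing ingredient.

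A secondary issue: your justification that $\Gamma_P^G$ commutes with $i_G^*$ is not the formal consequence of adjunction you claim. From $F\circ i_G^*\cong i_G^*\circ F$ (with $F$ the forgetful functor), passing to right adjoints yields $i_{G*}\circ\Gamma_P^G\cong\Gamma_P^G\circ i_{G*}$, i.e.\ commutation with $i_*$, not with $i^*$; the mate one obtains for $i^*$ is only a natural transformation, not an isomorphism in general. The paper avoids this by deducing both $\indLG$-identities from~\eqref{eqn:ipush-comm} via uniqueness of adjoints: $i_G^*\indLG$ and $\indLG i_L^*$ are both left adjoint to $\respLG i_{G*}\cong i_{L*}\respLG$, hence isomorphic. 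Your smooth-base-change computation $i_P^*p^!\cong\bar p^!i_L^*$ is fine, but it is not needed once one argues this way.
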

\begin{proof}
Let $\cN_P = \cN_G \cap \fp$, and consider the following commutative diagram:
\[
\xymatrix{ 
\cN_G \ar[d]_{i_G} & \cN_P \ar[d]_{i_P} \ar[l]_-{m} \ar[r]^-{p} & \cN_L \ar[d]^{i_L} \\
\fg & \fp \ar[l]_-{m} \ar[r]^-{p} & \fl }
\]
Both squares are cartesian.  In particular, we have $p_!i_P^* \cong i_L^* p_!$, and from this it follows that $\resLG$ commutes with $i^*$.  Similarly, $\respLG$ commutes with $i^!$.

Next, let $\bar P$ be the opposite parabolic to $P$, and let $\bar\fp$ be its Lie algebra.  We can form a diagram similar to the one above:
\[
\xymatrix{ 
\cN_G \ar[d]_{i_G} & \cN_{\bar P} \ar[d]_{i_{\bar P}} \ar[l]_-{\bar m} \ar[r]^-{\bar p} & \cN_L \ar[d]^{i_L} \\
\fg & \bar\fp \ar[l]_-{\bar m} \ar[r]^-{\bar p} & \fl }
\]
As in the previous paragraph, we deduce from this diagram that the functor $\bar p_* \bar m^!$ commutes with $i^!$.  But the main result of~\cite{bra} implies that $\resLG \cong \bar p_*\bar m^!$, so $\resLG$ commutes with $i^!$.  Similarly, the functor $\respLG \cong \bar p_! \bar m^*$ commutes with $i^*$.

Finally, $i^* \indLG$ and $\indLG i^*$ are left adjoint to $\respLG i_*$ and $i_*\respLG$, respectively.  In view of~\eqref{eqn:ipush-comm} and the fact that adjoints are unique, we conclude that $i_G^*\indLG \cong \indLG i_L^*$.  A similar argument with $\resLG$ shows that $i_G^!\indLG \cong \indLG i_L^!$.
\end{proof}

We conclude this section with a lemma on standard and costandard objects.  

\begin{lem}\label{lem:ind-std}
Consider a nilpotent orbit $j_C: C \hookrightarrow \cN_L$ and a local system $E$ on $C$.
\begin{enumerate}
\item Every simple subobject $\IC(C',E')$ of $\indLG \nabla(C,E)$ satisfies $G \cdot C \subset \overline{C'}$.
\item Every simple quotient $\IC(C',E')$ of $\indLG \Delta(C,E)$ satisfies $G \cdot C \subset \overline{C'}$.
\end{enumerate}
\end{lem}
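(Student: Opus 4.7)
The plan is to establish (1) directly via the adjunction $(\resLG, \indLG)$ combined with a cocharacter contraction argument in $Z^\circ(L)$, and then to reduce (2) to (1) by Verdier duality.

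For (1), I would start from a nonzero inclusion $\IC(C', E') \hookrightarrow \indLG \nabla(C, E)$ and use Proposition~\ref{prop:t-exact}\eqref{it:iradjoint} together with the $t$-exactness from Proposition~\ref{prop:t-exact}\eqref{it:irexact} to transfer it to a nonzero map of perverse sheaves
\[
f: \resLG \IC(C', E') \to \nabla(C, E).
\]
Since the unique simple subobject of $\nabla(C, E)$ is $\IC(C, E)$, the image of $f$ must contain $\IC(C, E)$, so $C$ is contained in the support of $\resLG \IC(C', E') = p_! m^* \IC(C', E')$. The next step is a stalk analysis: the fiber of $p: \cN_P \to \cN_L$ over $y \in \cN_L$ is $y + \fu_P$, where $\fu_P$ is the nilradical of $\fp$, so there must exist $y \in C$ and $u \in \fu_P$ with $y + u \in \overline{C'}$. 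Finally, I would choose a cocharacter $\lambda: \C^\times \to Z^\circ(L)$ with strictly positive weights on $\fu_P$; then
\[
\lambda(t) \cdot (y + u) = y + \mathrm{Ad}(\lambda(t))(u) \xrightarrow{\ t \to 0\ } y,
\]
since $\lambda$ fixes $\fl$ pointwise and contracts $\fu_P$ to the origin. Because $\overline{C'}$ is closed and $G$-stable, the entire path, and its limit $y$, lie in $\overline{C'}$; hence $G \cdot C \subset \overline{C'}$.

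For (2), I would apply Verdier duality. By Proposition~\ref{prop:t-exact}\eqref{it:verdier}, $\D \circ \indLG \cong \indLG \circ \D$, and the standard identities $\D \Delta(C, E) \cong \nabla(C, E^\vee)$ and $\D \IC(C', E') \cong \IC(C', E'^\vee)$ turn a surjection $\indLG \Delta(C, E) \twoheadrightarrow \IC(C', E')$ into an injection $\IC(C', E'^\vee) \hookrightarrow \indLG \nabla(C, E^\vee)$; part~(1) then finishes.

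The main substantive step is the cocharacter contraction: the adjunction and socle structure only yield a single element of $C + \fu_P$ sitting inside the closed $G$-stable set $\overline{C'}$, and one must upgrade this into $C$ itself (and hence $G \cdot C$) being contained in $\overline{C'}$. A one-parameter subgroup in $Z^\circ(L)$ accomplishes exactly this by retracting the $\fu_P$-direction to zero while fixing elements of $\fl$.
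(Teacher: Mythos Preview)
Your argument is correct, and it takes a genuinely different route from the paper's proof. The paper argues on the induction side: it shows directly that $\indLG \nabla(C,E)\cong \pH^0(h_*F)$ for the locally closed inclusion $h:V\hookrightarrow\cN_G$ with $V=G\cdot p^{-1}(C)$, using that the integration functor $\Gamma_P^G$ commutes with $h_*$; then any simple subobject $\IC(C',E')$ of $\pH^0(h_*F)$ must satisfy $C'\subset V$, and one concludes by quoting that $G\cdot C$ is the unique minimal $G$-orbit in $V$. You instead pass to the restriction side via the adjunction $(\resLG,\indLG)$, read off that $C$ lies in the support of $p_!m^*\IC(C',E')$, and then extract a point of $(C+\fu_P)\cap\overline{C'}$ by proper base change. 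Both proofs ultimately rest on the same geometric input---that an element of $C+\fu_P$ can be degenerated to an element of $C$ inside $\overline{C'}$---which the paper states as the minimal-orbit fact and you make explicit with the cocharacter in $Z^\circ(L)$. Your approach is lighter on the equivariant machinery (no need to analyze $\Gamma_P^G$ or its commutation with pushforwards), while the paper's approach yields as a byproduct an explicit model for $\indLG\nabla(C,E)$ as the zeroth perverse cohomology of a $*$-pushforward from $V$. Part~(2) via Verdier duality is handled identically in both.
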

\begin{proof}
Let $C_1 = p^{-1}(C) \subset \fp$, and let $V = G \cdot C_1$.  Let $h: V \to \cN_G$ be the inclusion map.  It can be seen from the proof of~\cite[Theorem~3.7.1]{bl} (using~\cite[Theorem~3.4.1]{bl}) that the integration functor $\Gamma_P^G$ commutes with $h_*$.  As a consequence, if we let $F = \Gamma_P^G (m|_{C_1})_* (p|_{C_1})^! E$, it is easy to deduce that $\indLG j_{C*}E[\dim C] \cong h_*F$.  Since $\indLG$ is $t$-exact, it follows that $\indLG \nabla(C,E) \cong \pH^0(h_*F)$, and the latter has no simple subobject $\IC(C',E')$ with $C' \not\subset V$.  Since $G \cdot C$ is the unique minimal $G$-orbit contained in $V$, the first assertion in the lemma holds.  The second assertion then follows by Verdier duality.
\end{proof}

\section{Construction of the autoequivalence}
\label{sect:autoequiv}

In this section, we will define the functor $\cR$ and prove one of the main results of the paper.  Let us fix, once and for all, a $G$-equivariant isomorphism $\fg \simto \fg^*$.  This allows to regard the Fourier--Sato transform $\bT_\fg$ as a functor $\cD_{G,\con}(\fg,\bF) \to \cD_{G,\con}(\fg,\bF)$.  Recall that $i: \cN \to \fg$ denotes the inclusion map.  Let
\[
\cR, \cRp: \cD_G(\cN_G,\bF) \to \cD_G(\cN_G,\bF)
\]
denote the functors given by
\begin{equation}\label{eqn:cr-defn}
\cR = i^* \bT_\fg i_*[\dim \cN_G - \dim \fg], \qquad
\cRp = i^! \bTp_\fg i_*[\dim \fg - \dim \cN_G].
\end{equation}
Note that for these definitions to make sense, we must verify that $i_*: \cD_G(\cN_G,\bF) \to \cD_G(\fg,\bF)$ actually takes values in $\cD_{G,\con}(\fg,\bF)$.  But this is immediate from the well-known fact that every nilpotent orbit in $\cN_G$ is stable under the scaling action of $\C^\times$ on $\fg$.

Below, we will make use of a number of facts about Fourier transforms of perverse sheaves on $\fg$ that are originally due to Lusztig~\cite{lus:ftsla} and were later revisited by Mirkovi\'c~\cite{mirk}.  The latter is a more suitable reference for us, because it is relatively straightforward to see that Mirkovi\'c's arguments are independent of the coefficient field $\bF$.

\begin{lem}\label{lem:r-basic}
The functor $\cR$ is left adjoint to $\cRp$.  Both functors commute with all restriction and induction functors.
\end{lem}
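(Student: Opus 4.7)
The argument has two independent parts, both largely formal. The adjunction $\cR \dashv \cRp$ follows by chaining together standard adjunctions: since $\bT_\fg$ is an equivalence with quasi-inverse $\bTp_\fg$, we have $\bT_\fg \dashv \bTp_\fg$, and this combines with $i^* \dashv i_* \dashv i^!$ to give, setting $a = \dim\cN_G - \dim\fg$,
\[
\Hom(\cR M, N) \cong \Hom(\bT_\fg i_*M[a], i_*N) \cong \Hom(i_*M, \bTp_\fg i_*N[-a]) \cong \Hom(M, \cRp N).
\]

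For the commutation statement, the essential external input is the theorem of Lusztig and Mirkovi\'c (see~\cite{mirk,lus:ftsla}) that the Fourier--Sato transform on the Lie algebra commutes with parabolic induction, $\bT_\fg \circ \indLG \cong \indLG \circ \bT_\fl$. Because $\indLG$ is simultaneously right adjoint to $\resLG$ and left adjoint to $\respLG$ by Proposition~\ref{prop:t-exact}\eqref{it:iradjoint}, passing to adjoints in this identity yields the analogous commutations $\bT_\fl \circ \resLG \cong \resLG \circ \bT_\fg$ and $\bT_\fl \circ \respLG \cong \respLG \circ \bT_\fg$ for the Lie-algebra functors.

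The commutations on $\cN_G$ are then short diagram chases combining the above with Lemma~\ref{lem:ires-comm} and~\eqref{eqn:ipush-comm}. For instance,
\[
\cR\, \indLG = i_G^* \bT_\fg i_{G*} \indLG [a] \cong i_G^* \bT_\fg \indLG i_{L*} [a] \cong i_G^* \indLG \bT_\fl i_{L*} [a] \cong \indLG\, i_L^* \bT_\fl i_{L*}[a] = \indLG\, \cR,
\]
where the last equality relies on the numerical identity $\dim\cN_G - \dim\fg = \dim\cN_L - \dim\fl$, valid because $\mathrm{rank}(\fl) = \mathrm{rank}(\fg)$. The remaining commutations (with $\resLG$ and $\respLG$, and for $\cRp$ via $i^!$ and $\bTp_\fg$ in place of $i^*$ and $\bT_\fg$) follow by identical chains. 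The only potential obstacle is bookkeeping of shifts and keeping track of which adjoint of $\bT$ is in play; the genuine geometric content is entirely absorbed into the Lusztig--Mirkovi\'c theorem and the base-change identities of Lemma~\ref{lem:ires-comm}.
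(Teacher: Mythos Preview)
Your proof is correct and follows essentially the same approach as the paper: the adjunction is obtained by chaining the standard adjunctions for $i^*, i_*, i^!$ and the equivalence $\bT_\fg$, and the commutation is deduced from Mirkovi\'c's result that Fourier transform commutes with induction and restriction on the Lie algebra, together with~\eqref{eqn:ipush-comm} and Lemma~\ref{lem:ires-comm}. The only cosmetic difference is that the paper cites \cite[Lemma~4.2]{mirk} directly for commutation of $\bT$ with all three functors $\resLG$, $\respLG$, $\indLG$, whereas you cite only the induction case and recover the restriction cases by passing to adjoints---but this is the same argument with one extra (and harmless) step.
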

\begin{proof}
The first assertion is immediate from the definition.  According to~\cite[Lemma~4.2]{mirk}, Fourier--Sato transform commutes with restriction and induction, so the second assertion above follows using~\eqref{eqn:ipush-comm} and Lemma~\ref{lem:ires-comm}.
\end{proof}

\begin{thm}\label{thm:main}
The functor $\cR: \cD_G(\cN_G,\bF) \to \cD_G(\cN_G,\bF)$ is an autoequivalence, and $\cRp$ is its inverse.
\end{thm}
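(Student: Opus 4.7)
The strategy is to exploit the adjunction $\cR \dashv \cRp$ furnished by Lemma~\ref{lem:r-basic}: since both functors are triangulated, it suffices to show that the unit $\eta\colon \id \to \cRp\cR$ and the counit $\epsilon\colon \cR\cRp \to \id$ are natural isomorphisms.  The full subcategory
\[
\cC := \{M \in \cD_G(\cN_G,\bF) : \eta_M \text{ and } \epsilon_M \text{ are isomorphisms}\}
\]
is a thick triangulated subcategory of $\cD_G(\cN_G,\bF)$, closed under direct summands.  It therefore suffices to exhibit a generating family of objects lying in $\cC$.

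I would argue by induction on $\dim G$, the case of a torus being immediate.  For the inductive step, Lemma~\ref{lem:r-basic} provides natural isomorphisms $\cR\,\indLG \cong \indLG\,\cR_L$ and $\cRp\,\indLG \cong \indLG\,\cRp_L$ for each Levi $L$; a formal verification --- unpacking these from base change together with the commutation of $\bT_\fg$ with smooth and proper maps --- shows that they intertwine the units and counits of the adjunctions $(\cR,\cRp)$ and $(\cR_L,\cRp_L)$.  Combined with the inductive hypothesis that $\cR_L$ is an autoequivalence for every proper Levi $L\subsetneq G$, this places $\indLG N$ in $\cC$ for every $N\in\cD_L(\cN_L,\bF)$ and every such $L$.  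The essential content of the base case is then to verify $M\in\cC$ for each cuspidal simple perverse sheaf $M$ on $\cN_G$ itself.  The crucial input --- due to Lusztig~\cite{lus:ftsla} and treated by Mirkovi\'c~\cite{mirk} in a form insensitive to the coefficient field --- is that both $\bT_\fg i_* M$ and $\bTp_\fg i_* M$ remain supported on $\cN_G$.  Granting this, the adjunction maps $i_* i^* \bT_\fg i_* M \to \bT_\fg i_* M$ and $i_* i^! \bTp_\fg i_* M \to \bTp_\fg i_* M$ are isomorphisms, and Fourier--Sato inversion $\bTp_\fg \bT_\fg \cong \id$ together with $i^* i_* \cong i^! i_* \cong \id$ then identifies $\cRp\cR M \simto M$ and $\cR\cRp M \simto M$ with $\eta_M$ and $\epsilon_M$ respectively.

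It remains to place every non-cuspidal simple $M\in\Perv_G(\cN_G,\bF)$ into $\cC$.  By Lemma~\ref{lem:induced}, $M$ is a quotient of some $\indLG N$ with $L\subsetneq G$ and $N\in\Perv_L(\cN_L,\bF)$, and $\indLG N$ lies in $\cC$ by the inductive step.  Lemma~\ref{lem:ind-std} supplies support constraints on the simple subquotients of $\indLG N$ (taking $N$ to be a simple cuspidal constituent supported on an orbit $C_0$, every such subquotient $\IC(C',E')$ satisfies $\overline{G\cdot C_0}\subseteq\overline{C'}$), and I would propagate $\cC$-membership from $\indLG N$ to $M$ by a dévissage along the closure order of these support orbits, using the thickness of $\cC$ at each step.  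The principal obstacle is the cuspidal base case, which hinges on the field-independent form of Lusztig--Mirkovi\'c's theorem on Fourier supports of cuspidal perverse sheaves; a secondary subtlety is the formal verification that the commutation isomorphisms of Lemma~\ref{lem:r-basic} are compatible with the adjunction units and counits.
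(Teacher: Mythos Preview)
Your overall architecture matches the paper's: reduce to the unit and counit, handle cuspidals via the Lusztig--Mirkovi\'c support result, and handle induced objects via commutation of $\cR$, $\cRp$ with $\indLG$ plus induction on the Levi.  The paper streamlines matters by introducing the auxiliary functor $\cK = i^!\,\bTp_\fg\, j_! j^*\, \bT_\fg\, i_*$ sitting in a functorial triangle $\cK \to \id \to \cRp\cR \to$, so that the problem becomes $\cK = 0$.  This also sidesteps your ``secondary subtlety'': once the second and third terms of the triangle commute with $\indLG$, one gets a (possibly non-canonical) isomorphism $\cK\circ\indLG \cong \indLG\circ\cK_L$ without ever matching adjunction units.

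The genuine gap is your passage from induced objects to arbitrary non-cuspidal simples.  You propose to propagate $\cC$-membership from $\indLG N$ to its simple quotient $M$ by a d\'evissage along the closure order, citing Lemma~\ref{lem:ind-std}.  But that lemma constrains only the simple \emph{quotients} of $\indLG\Delta(C,E)$ (respectively the simple \emph{subobjects} of $\indLG\nabla(C,E)$), not all composition factors, and in any case does not apply to the $N$ produced by Lemma~\ref{lem:induced}.  Even granting some support constraint, thickness of $\cC$ gives $M\in\cC$ only once the kernel $Q$ of $\indLG N\twoheadrightarrow M$ is already in $\cC$, and the constituents of $Q$ can be supported on orbits bearing no fixed relation to the support of $M$; neither upward nor downward induction on the closure order gets started.

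The paper's substitute is a cohomological-degree trick, and this is the missing idea.  Working with $\cK$, suppose some simple $M$ has $\cK(M)\ne 0$; since there are only finitely many simples, choose the largest $s$ with $\pH^s(\cK(M))\ne 0$ for some simple $M$.  Then $\pH^{s+1}(\cK(-))$ vanishes on every simple, hence on every perverse sheaf by filtering through a composition series.  For a simple $M$ realizing $s$ (necessarily non-cuspidal), take $P=\indLG N\twoheadrightarrow M$ with kernel $Q$; since $\cK(P)=0$, the long exact sequence gives $\pH^s(\cK(M))\cong\pH^{s+1}(\cK(Q))=0$, a contradiction.  The induction is on perverse-cohomological degree, not on support.
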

\begin{proof}
By Lemma~\ref{lem:r-basic}, we have adjunction maps $\id \to \cRp \cR$ and $\cR \cRp \to \id$.  We will show that the first of these is an isomorphism of functors.  A similar argument shows that the second is as well, and the theorem then follows.

To show that $\id \to \cRp\cR$ is an isomorphism, it suffices to show that it is an isomorphism on some set of objects that generate $\cD_G(\cN_G,\bF)$ as a triangulated category.  For instance, it suffices to show that it is an isomorphism for all simple perverse sheaves.

We proceed by induction on the semisimple rank of $\fg$.  If $\fg$ is abelian, then $\cN_G$ is a point.  The constant sheaf $\uF$ is the unique simple perverse sheaf on $\cN_G$.  It is easy to see by direct computation that $\cR(\uF) \cong \uF$ and $\cRp(\uF) \cong \uF$.  The adjunction morphism $\uF \to \cRp(\cR(\uF))$ is nonzero, and hence an isomorphism.

Now, assume that $\fg$ has semisimple rank at least~$1$. Let $\fg'$ be its derived subalgebra.  We claim that if the theorem holds for $\fg'$, then it holds for $\fg$.  Let $\fz$ be the center of $\fg$, so that $\fg \cong \fg' \times \fz$.  Let $i_1: \cN \to \fg'$ be the inclusion map, and consider the skyscraper sheaf $\delta_\fz$.  For $M \in \cD_G(\cN_G,\bF)$, we have
\begin{multline*}
\cR_\fg(M) = i^*\bT_\fg i_*M \cong i^*(\bT_{\fg'} \boxtimes \bT_\fz)(i_{1*}M \boxtimes \delta_\fz) \\
\cong i^*(\bT_{\fg'}i_{1*}M \boxtimes \uF_\fz[\dim \fz]) \cong \cR_{\fg'}(M)[\dim \fz].
\end{multline*}
Similar reasoning shows that $\cRp_\fg(M) \cong \cRp_{\fg'}(M)[-\dim \fz]$.  From this, we deduce that $\id \to \cRp_\fg\cR_\fg$ is an isomorphism if and only if $\id \to \cRp_{\fg'} \cR_{\fg'}$ is.

We henceforth assume that $\fg$ is semisimple.  Let $j: \fg \smallsetminus \cN_G \to \fg$ be the inclusion map, and consider the functorial distinguished triangle $j_!j^* \to \id \to i_*i^* \to$.  Composing on the left with $i^! \bTp_\fg$ and on the right with $\bT_\fg i_*$, we get a functorial distinguished triangle
\begin{equation}\label{eqn:func-dt}
i^! \bTp_\fg j_!j^* \bT_\fg i_* \to \id \to \cRp \cR \to.
\end{equation}
Let $\cK = i^! \bTp_\fg j_!j^* \bT_\fg i_*$.  Showing that $\id \to \cRp \cR$ is an isomorphism is equivalent to showing that $\cK$ is the zero functor. 

For a Levi subgroup $L \subset G$, let $\cR_L$, $\cRp_L$, and $\cK_L$ denote the analogous functors on $\cD_L(\cN_L)$.  By induction, $\cK_L = 0$.  We now proceed in three steps.

{\it Step 1. If $M \in \Perv_G(\cN_G,\bF)$ is cuspidal, then $\cK(M) = 0$.} According to~\cite[Lemma~4.4]{mirk}, every cuspidal perverse sheaf on $\fg$ is in fact supported on $\cN_G$.  (Here, we are using the assumption that $\fg$ is semisimple.)  Since the Fourier--Sato transform of a cuspidal perverse sheaf is cuspidal, we see that $\bT_\fg i_* M$ is supported on $\cN_G$, so $j^* \bT_\fg i_*M = 0$.  Thus, $\cK(M) = 0$.

{\it Step 2. If $M = \indLG N$ for some $N \in \Perv_L(\cN_L,\bF)$, where $L$ is a proper Levi subgroup, then $\cK(M) = 0$.}  The second and third terms of~\eqref{eqn:func-dt} commute with $\indLG$, so we can find an (a priori non-canonical) isomorphism
\[
\cK(M) \cong \indLG \cK_L(N).
\]
By our induction hypothesis $\cK_L(N) = 0$, and thus $\cK(M) = 0$.

{\it Step 3. If $M$ is any simple perverse sheaf, then $\cK(M) = 0$.}  Assume that this is not the case, and consider the perverse cohomology objects $\pH^i(\cK(M))$.  Let $k(M)$ be the largest integer such that $\pH^{k(M)}(\cK(M)) \ne 0$.  Let $s$ be the maximum value of $k(M)$ as $M$ ranges over all simple perverse sheaves with $\cK(M) \ne 0$.  Thus, $\pH^{s
+1}(\cK(M)) = 0$ for all simple $M$, and it follows that
\[
\pH^{s+1}(\cK(N)) = 0
\qquad\text{for all $N \in \Perv_G(\cN_G)$,}
\]
not necessarily simple.  Now, take a simple $M$ such that $\cK(M) \ne 0$ and $k(M) = s$.  In view of Step~1, $M$ cannot be cuspidal, so by Lemma~\ref{lem:induced}, there is some surjective map $P \to M$ where $P$ is induced from a proper Levi subalgebra.  Let $Q$ be the kernel of $P \to M$.  By Step~2, $\cK(P) = 0$, so from the long exact sequence in perverse cohomology associated to $\cK(Q) \to \cK(P) \to \cK(M) \to$, we have
\[
\pH^i(\cK(M)) \cong \pH^{i+1}(\cK(Q))
\]
for all $i$.  Since the right-hand side vanishes for $i+1 = s+1$ but the left-hand side is nonzero for $i = s$, we have a contradiction, and $\cK(M) = 0$ for all simple perverse sheaves $M$.
\end{proof}

\section{Skyscraper and Richardson sheaves}
\label{sect:richardson}

In this section, we study the behavior of $\cR$ on skyscraper sheaves and on sheaves induced from skyscraper sheaves, known as \emph{Richardson sheaves}.  The terminology comes from the following observation: given a parabolic subgroup $P \subset G$, let $\pi: G \times^P \fu \to \fg$ be the map $\pi(g,x) = g \cdot x$.  It is easy to see from the definition that for any Levi subgroup $L \subset G$, we have
\begin{equation}\label{eqn:richardson}
\indLG \d_\fl \cong \pi_*\uF_{G \times^P \fu}[\dim G \times^P \fu].
\end{equation}
Thus, the support of $\indLG \d_\fl$ is the image of $\pi$, which is the closure of the Richardson orbit associated to $\fl$.

For the next proposition, let $\cO_\reg \subset \cN_G$ denote the regular nilpotent orbit.  Also, let $\Cent(G) \subset G$ denote the center of $G$, and let $\Cento(G) \subset \Cent(G)$ be its identity component. 

\begin{prop} \label{prop:const}
The object $\cR(\d_\fg)$ is a perverse sheaf.  In fact, we have
\[
\cR(\d_\fg) \cong \uF_{\cN_G}[\dim \cN_G] \cong \Delta(\cO_\reg,\uF).
\]
Moreover, if the characteristic of $\bF$ does not divide the order of $\Cent(G)/\Cento(G)$, then $\cR(\d_\fg)$ is a projective object in $\Perv_G(\cN_G,\bF)$.
\end{prop}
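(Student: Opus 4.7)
The plan is to compute $\cR(\d_\fg)$ explicitly and then verify each assertion. By property~(1) of the Fourier--Sato transform, $\bT_\fg(\d_\fg) \cong \uF_\fg[\dim \fg]$; unwinding the definition of $\cR$ immediately gives
\[
\cR(\d_\fg) \cong i^*\uF_\fg[\dim \cN_G] \cong \uF_{\cN_G}[\dim \cN_G].
\]
To see that this is perverse, I would invoke the classical fact (due to Kostant) that $\cN_G$ is a complete intersection in $\fg$ cut out scheme-theoretically by the fundamental invariants, so that $\cN_G$ is Gorenstein with trivial canonical bundle. Consequently $\omega_{\cN_G} \cong \uF_{\cN_G}[2\dim \cN_G]$ in $\cD_G(\cN_G,\bF)$, making $\uF_{\cN_G}[\dim \cN_G]$ Verdier self-dual; combined with the obvious support bound, this forces it to be perverse.

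For the identification with $\Delta(\cO_\reg,\uF)$, I let $j:\cO_\reg \hookrightarrow \cN_G$ be the open inclusion and $i':Z \hookrightarrow \cN_G$ the closed complement with $Z = \cN_G \smallsetminus \cO_\reg$. Since every nilpotent orbit in $Z$ has codimension at least $2$ in $\cN_G$, a direct stalk count shows $i'_*\uF_Z[\dim \cN_G] \in {}^pD^{\le -2}$. The long exact sequence of perverse cohomology applied to the distinguished triangle
\[
j_!\uF_{\cO_\reg}[\dim \cN_G] \to \uF_{\cN_G}[\dim \cN_G] \to i'_*\uF_Z[\dim \cN_G] \to
\]
then produces $\Delta(\cO_\reg,\uF) = \pH^0(j_!\uF_{\cO_\reg}[\dim \cN_G]) \simto \pH^0(\uF_{\cN_G}[\dim \cN_G]) = \uF_{\cN_G}[\dim \cN_G]$.

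For projectivity, since $\Perv_G(\cN_G,\bF)$ has only finite-length objects, it suffices to verify $\Ext^1_{\Perv}(\cR(\d_\fg), L) = 0$ for every simple $L = \IC(C,E)$. Applying $\Hom_\cD(-, L[1])$ to the triangle above produces a three-term exact sequence, and I will show both outer terms vanish. The term $\Hom_\cD(i'_*\uF_Z[\dim \cN_G], L[1])$ vanishes purely by the $t$-structure axiom, since $i'_*\uF_Z[\dim \cN_G] \in {}^pD^{\le -2}$ while $L[1] \in {}^pD^{\ge -1}$. For the other outer term, $(j_!,j^*)$-adjunction identifies it with $\Hom_\cD(\uF_{\cO_\reg}, j^*L[1-\dim \cN_G])$, which is zero when $C \ne \cO_\reg$ and reduces to $H^1_G(\cO_\reg, E)$ when $C = \cO_\reg$. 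Using $\cO_\reg \cong G/\Cent_G(x_\reg)$ and the fact that the unipotent radical of $\Cent_G(x_\reg)$ is contractible while the central torus $\Cento(G)$ contributes only to even-degree cohomology of $B\Cent_G(x_\reg)$, this equivariant cohomology group collapses to $H^1(\Cent(G)/\Cento(G), E)$, which vanishes under the characteristic hypothesis by Maschke's theorem.

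The principal obstacle is the last step: extracting $H^1(\Cent(G)/\Cento(G), E)$ from $H^1_G(\cO_\reg, E)$. This requires knowing the structure of the centralizer of a regular element (in particular, that its component group is $\Cent(G)/\Cento(G)$) and running a spectral sequence for $B\Cent_G(x_\reg) \to B(\Cent(G)/\Cento(G))$ to see that the torus and unipotent contributions die in odd degrees. Everything else reduces to routine manipulation with the perverse $t$-structure and Kostant's complete intersection description of $\cN_G$.
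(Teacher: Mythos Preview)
Your overall strategy matches the paper's: compute $\cR(\delta_\fg)$ directly, use the complete-intersection property for perversity, then run the open--closed triangle for $j:\cO_\reg\hookrightarrow\cN_G$ both to identify $\Delta(\cO_\reg,\uF)$ and to reduce projectivity to a vanishing statement on $\cO_\reg$.  The distinguished-triangle manipulations and the case split $C=\cO_\reg$ versus $C\neq\cO_\reg$ are exactly what the paper does.

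There is, however, a genuine error in your perversity step.  The implication ``$\cN_G$ is Gorenstein with trivial canonical bundle, consequently $\omega_{\cN_G}\cong\uF_{\cN_G}[2\dim\cN_G]$ in $\cD_G(\cN_G,\bF)$'' conflates the \emph{coherent} dualizing complex with the \emph{constructible} one.  These are unrelated for singular varieties, and in fact the constructible claim is false in general: for $G=SL_2$ the link of the origin in $\cN_G$ is $\mathbb{RP}^3$, which is not an $\bF_2$-homology sphere, so $\uF_{\cN_G}[\dim\cN_G]$ is \emph{not} Verdier self-dual when $\operatorname{char}\bF=2$.  Since the perversity assertion in the proposition carries no hypothesis on $\operatorname{char}\bF$, your argument does not cover all cases.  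The fix is the one the paper uses: cite directly that for a local complete intersection $X$, the object $\uF_X[\dim X]$ is perverse (for any coefficient field), which follows from iterated affine Lefschetz and needs no self-duality.

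For projectivity your argument is correct but more laborious than necessary.  The paper, like you, obtains the injection $\Hom(\uF_{\cN_G}[\dim\cN_G],M[1])\hookrightarrow\Hom(j_!\uF_{\cO_\reg}[\dim\cN_G],M[1])$ from the $t$-structure bound on the closed term; but to kill the target it simply invokes the identification $\Hom_{\cD_G(\cO_\reg)}(-,-[1])\cong\Ext^1_{\Perv_G(\cO_\reg)}(-,-)$ and observes that $\Perv_G(\cO_\reg,\bF)\simeq\operatorname{Rep}_\bF(\Cent(G)/\Cento(G))$ is semisimple under the characteristic hypothesis.  Your route through the Serre spectral sequence for $B\Cent_G(x_\reg)\to B(\Cent(G)/\Cento(G))$ reaches the same conclusion, and your sketch of it is accurate (the identity component really is a torus times a unipotent group, contributing only in even degrees), but the categorical shortcut avoids the need to unpack the centraliser structure beyond identifying its component group.
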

\begin{proof}
Following the definitions gives
\[
\cR \d = i^* \bT \d [\dim \cN_G - \dim \fg] = i^*
\uF_\fg [\dim \cN_G] = \uF_{\cN_G}[\dim \cN_G].
\]
Recall that $\cN$ is a complete intersection~\cite{kostant}, and
that on a complete intersection, the constant sheaf shifted by the
dimension is perverse.

Consider the adjunction triangle for the inclusion $j$ of the regular
orbit $\cO_\reg$ into $\cN_G$ and the perverse sheaf $\uF_{\cN_G}[\dim\cN_G]$,
\[ 
j_! \uF_{\cO_\reg}[\dim\cN_G] \to \uF_{\cN_G}[\dim\cN_G] \to
\uF_{\cN_G \smallsetminus \cO_\reg}[\dim\cN_G] \to
\]
As the codimension of $\cN_G \smallsetminus \cO_\reg$ is at least 2, $\uF_{\cN_G \smallsetminus \cO_\reg}[\dim\cN_G] \in {}^p\cD_G(\cN)^{\leq -2}$.  Taking the long exact sequence in perverse cohomology, we obtain
\[
\pH^0(j_!\uF_{\cO_\reg}[\dim\cN_G]) \cong \uF_{\cN_G}[\dim\cN_G],
\]
as desired.

Another consequence is that for any $M \in \Perv_G(\cN_G,\bF)$, we have an injective map $\Hom(\uF_{\cN_G}[\dim \cN_G], M[1]) \to \Hom(j_!\uF_{\cO_\reg}[\dim \cN_G], M[1])$.  By adjunction, this yields an injective map
\[
\Ext^1_{\Perv_G(\cN_G,\bF)}(\uF_{\cN_G}[\dim \cN_G], M) \to \Ext^1_{\Perv_G(\cO_\reg,\bF)}(\uF_{\cO_\reg}[\dim \cN_G], j^*M).
\]
Recall that the category of $G$-equivariant local systems on $\cO_\reg$ is equivalent to the category of representations of the finite group $\Cent(G)/\Cento(G)$.  Under the assumption that the characteristic of $\bF$ does not divide the order of this group, this category is semisimple, and the $\Ext^1$-groups above vanish for all $M$.  Thus, $\uF_{\cN_G}[\dim \cN_G]$ is projective.
\end{proof}

\begin{cor}\label{cor:proj}
For any Levi subgroup $L \subset G$, the object $\cR(\indLG \d_\fl)$ is perverse.  If the characteristic of $\bF$ does not divide the order of $\Cent(L)/\Cento(L)$, then $\cR(\indLG \d_\fl)$ is a projective object in $\Perv_G(\cN_G,\bF)$.
\end{cor}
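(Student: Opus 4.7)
The plan is to reduce everything to the case $L = G$ already handled in Proposition~\ref{prop:const}, using the compatibility of $\cR$ with induction together with exactness properties from Proposition~\ref{prop:t-exact}.

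First I would write $\cR(\indLG \d_\fl) \cong \indLG \cR_L(\d_\fl)$, using the second part of Lemma~\ref{lem:r-basic} which says $\cR$ commutes with induction (here $\cR_L$ denotes the analogous Fourier-restriction functor for $L$, as introduced in the proof of Theorem~\ref{thm:main}). Now Proposition~\ref{prop:const}, applied to $L$ in place of $G$, gives $\cR_L(\d_\fl) \cong \uF_{\cN_L}[\dim \cN_L]$, which is a perverse sheaf on $\cN_L$. Since $\indLG$ is $t$-exact by Proposition~\ref{prop:t-exact}\eqref{it:irexact}, the object $\indLG \cR_L(\d_\fl) \cong \cR(\indLG \d_\fl)$ is again perverse, giving the first assertion.

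For projectivity, the key observation is that $\indLG$ preserves projectivity under our $t$-exactness setup. Indeed, by Proposition~\ref{prop:t-exact}\eqref{it:iradjoint}, $\indLG$ is left adjoint to $\respLG$, and by Proposition~\ref{prop:t-exact}\eqref{it:irexact}, $\respLG$ is $t$-exact, hence exact as a functor between the abelian categories of perverse sheaves. Consequently, if $P \in \Perv_L(\cN_L,\bF)$ is projective, then for any short exact sequence $0 \to A \to B \to C \to 0$ in $\Perv_G(\cN_G,\bF)$, the sequence $0 \to \respLG A \to \respLG B \to \respLG C \to 0$ is exact in $\Perv_L(\cN_L,\bF)$, so $\Hom(\indLG P, -) \cong \Hom(P, \respLG -)$ is exact. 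Thus $\indLG P$ is projective in $\Perv_G(\cN_G,\bF)$.

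Applying this with $P = \cR_L(\d_\fl) \cong \uF_{\cN_L}[\dim \cN_L]$, which is projective in $\Perv_L(\cN_L,\bF)$ whenever the characteristic of $\bF$ does not divide $|\Cent(L)/\Cento(L)|$ by the second part of Proposition~\ref{prop:const}, we conclude that $\cR(\indLG \d_\fl) \cong \indLG P$ is projective in $\Perv_G(\cN_G,\bF)$. There is no real obstacle here; the corollary is essentially a formal packaging of the earlier proposition, with the only subtle point being to verify that $\indLG$ preserves projectives, which follows cleanly from the adjunction $(\indLG, \respLG)$ combined with the $t$-exactness of $\respLG$.
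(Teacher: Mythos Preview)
Your proof is correct and is precisely the argument the paper has in mind; the corollary is stated without proof in the paper, as an immediate consequence of Proposition~\ref{prop:const}, Lemma~\ref{lem:r-basic}, and Proposition~\ref{prop:t-exact}, and you have simply spelled out the details of that deduction.
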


\section{Ringel duality for $GL_n$}
\label{sect:ringel}

In this section, we restrict ourselves to the group $G = GL_n$.  Recall that for this group, the category $\Perv_G(\cN_G,\bF)$ is equivalent to the category of modules over the Schur algebra $S_\bF(n,n)$~\cite{mau}.  In particular, $\Perv_G(\cN_G,\bF)$ is quasi-hereditary and contains tilting and projective objects.  The aim of this section is to show that for $GL_n$, the functor $\cR$ is a geometric version of Ringel duality.

Let us first fix some notation.  We take $T$ (resp.~$B$) to be the subgroup of diagonal (resp.~upper triangular) matrices.  Recall that nilpotent orbits in $\cN_G$ are indexed by partitions of $n$.  Given a partition $\lambda = (\lambda_1 \ge \cdots \ge \lambda_k)$ of $n$, let $\cO_\lambda \subset \cN_G$ be the corresponding orbit.  Thus, $\cO_{(n)}$ is the regular orbit, and $\cO_{(1,\ldots,1)}$ is the zero orbit.  Let $L_\lambda \subset G$ be the Levi subgroup consisting of block-diagonal matrices with blocks of size $\lambda_1, \ldots, \lambda_k$, and let $P_\lambda$ be the parabolic containing $L_\lambda$ and $B$.  

Let $\lambda^\vee$ denote the dual partition to $\lambda$.  It is well known that the Richardson orbit associated to $L_\lambda$ is $\cO_{\lambda^\vee}$.  On the other hand, if $\cO^{L_\lambda}_\reg$ denotes the regular orbit in $\cN_{L_\lambda}$, we have
\begin{equation}\label{eqn:coind-orbit}
G \cdot \cO^{L_\lambda}_\reg = \cO_\lambda.
\end{equation}
For brevity, we write $\IC_\l$ for $\IC(\cO_\l,\uF)$.  Let $\cT_\l$ be an indecomposable tilting perverse sheaf with support $\overline{\cO_\l}$, and let $\cP_\l$ be a projective cover of $\IC_\l$.

\begin{thm} \label{thm:ringel}
Let $G=GL_n$. For any partition $\l$ of $n$, we have $\cR(\cT_{\l^\vee}) \cong \cP_\l$.
\end{thm}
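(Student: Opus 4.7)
The plan is to prove the theorem by downward induction on $\l$ with respect to the dominance order on partitions of $n$, using the Richardson sheaf $R_\l := \indLGl \d_\fl$ as the main tool.

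I will first establish three structural facts about $R_\l$. (a) $R_\l$ is tilting: $\d_\fl$ is a simple (hence tilting) object in $\Perv_{L_\l}(\cN_{L_\l},\bF)$; since $\indLGl$ is $t$-exact and commutes with $\D$ by Proposition~\ref{prop:t-exact}, $R_\l$ is Verdier self-dual, and the geometric description~\eqref{eqn:richardson} together with a stratification of $G \times^{P_\l} \fu$ by $G$-orbit type yields a $\Delta$-filtration, hence also a $\nabla$-filtration by self-duality. (b) By~\eqref{eqn:richardson} the support of $R_\l$ is $\overline{\cO_{\l^\vee}}$, and birationality of the Richardson map for $GL_n$ forces $R_\l|_{\cO_{\l^\vee}} \cong \uF[\dim \cO_{\l^\vee}]$; consequently the tilting decomposition takes the form
\[
R_\l \cong \cT_{\l^\vee} \oplus \bigoplus_{\mu > \l} \cT_{\mu^\vee}^{m_\mu}
\]
with $\cT_{\l^\vee}$ appearing with multiplicity exactly $1$. (c) $\cR(R_\l)$ is projective: Lemma~\ref{lem:r-basic} gives $\cR(R_\l) \cong \indLGl(\cR_{L_\l}\d_\fl)$; Proposition~\ref{prop:const} applied to $L_\l$ identifies $\cR_{L_\l}\d_\fl \cong \uF_{\cN_{L_\l}}[\dim \cN_{L_\l}] \cong \Delta(\cO^{L_\l}_\reg,\uF)$ as a projective object (since $L_\l$ has connected center); and $\indLGl$ preserves projectivity because its left and right adjoints are both exact (Proposition~\ref{prop:t-exact}).

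The base case $\l=(n)$ is immediate: $\cT_{(1^n)} = \d_\fg$ and Proposition~\ref{prop:const} yields $\cR(\d_\fg) \cong \IC_{(n)} \cong \cP_{(n)}$. For the inductive step, assume $\cR(\cT_{\mu^\vee}) \cong \cP_\mu$ for all $\mu > \l$; applying $\cR$ to the decomposition in (b) then gives
\[
\cR(R_\l) \cong \cR(\cT_{\l^\vee}) \oplus \bigoplus_{\mu > \l} \cP_\mu^{m_\mu}.
\]
Since $\cR$ is an autoequivalence and $\cT_{\l^\vee}$ is indecomposable, $\cR(\cT_{\l^\vee})$ is an indecomposable summand of the (perverse) projective $\cR(R_\l)$, hence $\cR(\cT_{\l^\vee}) \cong \cP_\tau$ for some partition $\tau$. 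Applying Lemma~\ref{lem:ind-std}(2) to $\cR(R_\l) \cong \indLGl \Delta(\cO^{L_\l}_\reg,\uF)$, in conjunction with~\eqref{eqn:coind-orbit}, forces every indecomposable projective summand of $\cR(R_\l)$ to have the form $\cP_\sigma$ with $\sigma \geq \l$; in particular, $\tau \geq \l$.

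The main obstacle is to rule out $\tau > \l$, which I plan to do by showing $\cP_\l$ itself appears as a direct summand of $\cR(R_\l)$: since each of the other summands $\cP_\mu$ has $\mu > \l \neq \l$, this forces $\tau = \l$. By adjunction, the desired nonvanishing $\Hom(\cR(R_\l),\IC_\l) \neq 0$ translates to
\[
\Hom\bigl(\IC(\cO^{L_\l}_\reg,\uF),\; \respLG \IC_\l\bigr) \neq 0,
\]
i.e., $\IC(\cO^{L_\l}_\reg,\uF)$ must be a subobject of $\respLG \IC_\l$. I would verify this via the opposite-parabolic description $\respLG \cong \bar p_! \bar m^*$ from the proof of Lemma~\ref{lem:ires-comm} together with the inclusion $\cO^{L_\l}_\reg \subset \cO_\l$ from~\eqref{eqn:coind-orbit}, computing the stalk of $\respLG \IC_\l$ at a regular element $x \in \cO^{L_\l}_\reg$ by a transversality analysis of the affine slice $(x + \bar\fu) \cap \overline{\cO_\l}$. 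This stalk computation is the key technical input.
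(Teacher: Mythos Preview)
Your overall architecture matches the paper: downward induction on $\lambda$, using the Richardson sheaf $R_\lambda = \indLGl\delta$, establishing that $R_\lambda$ is tilting with $\cT_{\lambda^\vee}$ as a summand, that $\cR(R_\lambda)$ is projective, and that all projective summands of $\cR(R_\lambda)$ are of the form $\cP_\mu$ with $\mu \geq \lambda$ via Lemma~\ref{lem:ind-std} and~\eqref{eqn:coind-orbit}. Two points deserve comment.

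\textbf{The ``main obstacle'' is not an obstacle.} Having shown $\cR(\cT_{\lambda^\vee}) \cong \cP_\tau$ with $\tau \geq \lambda$, you propose to rule out $\tau > \lambda$ by producing $\cP_\lambda$ explicitly as a summand of $\cR(R_\lambda)$ via a transversality/stalk computation on $\respLG \IC_\lambda$. This is unnecessary. If $\tau > \lambda$, then by the inductive hypothesis $\cP_\tau \cong \cR(\cT_{\tau^\vee})$, so $\cR(\cT_{\lambda^\vee}) \cong \cR(\cT_{\tau^\vee})$; since $\cR$ is an autoequivalence (Theorem~\ref{thm:main}), this forces $\cT_{\lambda^\vee} \cong \cT_{\tau^\vee}$, a contradiction. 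That is exactly how the paper finishes. Your proposed stalk computation (besides being unneeded) is also slightly misstated: the object you are mapping out of is $\Delta(\cO^{L_\lambda}_{\reg},\uF)$, not $\IC(\cO^{L_\lambda}_{\reg},\uF)$, and the claim you would actually need is that $\IC(\cO^{L_\lambda}_{\reg},\uF)$ occurs as a \emph{composition factor} of $\respLG \IC_\lambda$, not as a subobject.

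\textbf{The tilting argument in (a) is incomplete as stated.} Saying that a stratification of $G\times^{P_\lambda}\fu_\lambda$ ``by $G$-orbit type'' yields a $\Delta$-filtration of $R_\lambda$ sweeps the real work under the rug: the graded pieces of the filtration you obtain this way are of the form $(j_\mu)_!(j_\mu)^*R_\lambda$, and identifying these with direct sums of standard objects $\Delta_\mu$ requires nontrivial input about the fibers of $\pi$. The paper's Lemma~\ref{lem:tilt} proceeds differently: it uses the affine paving of Spaltenstein fibers (Brundan--Ostrik) to show that the stalks of $R_\lambda$ are concentrated in even degrees, and then a parity/d\'evissage argument forces $\Ext^1(R_\lambda,\nabla_\mu)=0$ (and dually), which is the tilting criterion. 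If you want to keep your approach, you should explain concretely why each graded piece is $\Delta$-filtered; otherwise, the parity argument is cleaner.
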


For any partition $\l$, let $P_\l$ be the standard parabolic with diagonal blocks of size the parts of $\l$.  Let $\l^\vee$ denote the dual partition.

\begin{lem} \label{lem:tilt}
\begin{enumerate}
\item The induced perverse sheaf $\indLGl \d$ is tilting.
\item $\indLGl \d \cong \cT_{\l^\vee} \oplus \bigoplus_{\mu < \l^\vee} \cT_\mu^{m_\mu}$ where $m_\mu$ is some multiplicity.\label{it:tilt-supp}
\end{enumerate}
\end{lem}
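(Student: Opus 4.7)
The plan is to prove~(1) by combining Verdier self-duality with an Ext-vanishing argument via adjunction, and to prove~(2) by a support analysis exploiting the birationality of the parabolic Springer map over the Richardson orbit.

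For~(1), I will first observe that in the highest weight category $\Perv_{L_\lambda}(\cN_{L_\lambda}, \bF)$ the zero orbit is the unique minimum in the closure (equivalently, dominance) order, so the skyscraper $\delta_{\fl_\lambda}$ is the simple object associated to this minimum orbit and coincides with both $\Delta(\{0\}, \uF)$ and $\nabla(\{0\}, \uF)$. In particular $\delta_{\fl_\lambda}$ is Verdier self-dual, and since $\indLGl$ commutes with Verdier duality by Proposition~\ref{prop:t-exact}\eqref{it:verdier}, so is $\indLGl \delta_{\fl_\lambda}$. It will then suffice to exhibit a $\Delta$-filtration, since Verdier duality will convert it into a $\nabla$-filtration, yielding the tilting property.

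To construct the $\Delta$-filtration I will verify the standard criterion that $\Ext^1(\indLGl \delta_{\fl_\lambda}, \nabla(\cO_\mu, E)) = 0$ for every orbit $\cO_\mu \subset \cN_G$ and every irreducible $G$-equivariant local system $E$ on it.  By the $(\indLGl, \respLGl)$-adjunction of Proposition~\ref{prop:t-exact}\eqref{it:iradjoint}, this rewrites as
\[
\Ext^1_{\Perv_{L_\lambda}(\cN_{L_\lambda})}(\delta_{\fl_\lambda}, \respLGl \nabla(\cO_\mu, E)).
\]
Once we know that $\respLGl$ sends costandards to $\nabla$-filtered objects in $\Perv_{L_\lambda}(\cN_{L_\lambda}, \bF)$ — a standard property of geometric parabolic restriction in Springer theory, closely related to the material in~\cite[\S 4]{ahr} — this Ext-group vanishes because $\delta_{\fl_\lambda}$ is a standard object and $\Ext^{>0}(\Delta, \nabla) = 0$ in any highest weight category.

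For~(2), I will start from~\eqref{eqn:richardson}, which identifies $\indLGl \delta_{\fl_\lambda}$ with $\pi_*\uF_{G \times^{P_\lambda} \fu_\lambda}[\dim G \times^{P_\lambda} \fu_\lambda]$, where $\pi(g,x) = g\cdot x$. The image of $\pi$ is the closure of the Richardson orbit associated to $P_\lambda$, which for $G = GL_n$ is $\overline{\cO_{\lambda^\vee}}$; consequently $\indLGl \delta_{\fl_\lambda}$ is supported in $\overline{\cO_{\lambda^\vee}}$, and every tilting summand $\cT_\mu$ in its decomposition satisfies $\overline{\cO_\mu} \subseteq \overline{\cO_{\lambda^\vee}}$, i.e., $\mu \leq \lambda^\vee$ in dominance order. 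To pin down the top summand, I will restrict to the open orbit $\cO_{\lambda^\vee}$: in type A the classical fact that $\cO_{\lambda^\vee} \cap \fu_\lambda$ is a single (dense) $P_\lambda$-orbit makes $\pi$ birational over $\cO_{\lambda^\vee}$, so $\indLGl \delta_{\fl_\lambda}|_{\cO_{\lambda^\vee}} \cong \uF[\dim \cO_{\lambda^\vee}]$ has rank one. Since $\cT_{\lambda^\vee}|_{\cO_{\lambda^\vee}}$ also has rank one (its $\Delta$-filtration contains $\Delta(\cO_{\lambda^\vee}, \uF)$ exactly once, and all other factors are supported in $\overline{\cO_{\lambda^\vee}} \setminus \cO_{\lambda^\vee}$), this forces $m_{\lambda^\vee} = 1$.

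The hard part will be justifying, uniformly in the coefficient field $\bF$, that $\respLGl$ preserves $\nabla$-filtrations of perverse sheaves, which is the key input for~(1). This is a feature of geometric parabolic restriction that is classical in characteristic zero, but a careful verification for arbitrary $\bF$ seems to require either a direct geometric argument building on Lemma~\ref{lem:ires-comm} and the $t$-exactness in Proposition~\ref{prop:t-exact}\eqref{it:irexact}, or an extension of the results of~\cite[\S 4]{ahr}. Everything else in the plan is standard.
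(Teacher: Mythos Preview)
Your treatment of part~(2) matches the paper's: both use~\eqref{eqn:richardson}, the fact that $\pi$ has image $\overline{\cO_{\lambda^\vee}}$ and is an isomorphism over $\cO_{\lambda^\vee}$, and then decompose into indecomposable tiltings by support.

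For part~(1) you take a genuinely different route, and the step you yourself flag as the ``hard part''---that $\resp_{L_\lambda}^G$ sends costandard objects to $\nabla$-filtered objects for arbitrary $\bF$---is a real gap. This is not established anywhere in the paper, and \cite[\S4]{ahr} proves $t$-exactness of these functors, not preservation of standard or costandard filtrations; I do not know a reference for the filtration statement in this generality. The paper avoids the issue entirely by a direct parity argument on $\cN_G$: identifying $\indLGl\delta$ with $\pi_*\uF[\dim G\times^{P_\lambda}\fu_\lambda]$ via~\eqref{eqn:richardson}, it uses that the fibres of $\pi$ (Spaltenstein varieties) admit affine pavings~\cite{bo}, so the stalks of $\indLGl\delta$ live in even degrees; combined with the vanishing of $G$-equivariant cohomology of each orbit in odd degrees, a d\'evissage gives $\Ext^1(\indLGl\delta,\nabla_\mu)=0$, and Verdier self-duality (which you also invoke) then yields $\Ext^1(\Delta_\mu,\indLGl\delta)=0$. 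Thus the paper's argument is self-contained, resting on a concrete geometric input specific to $GL_n$, whereas your adjunction approach would be more conceptual and portable \emph{if} the lemma about $\resp_{L_\lambda}^G$ were available---but as written, it defers the essential difficulty rather than resolving it.
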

\begin{proof}
(1)~Consider the map $\pi: G \times^{P_\l} \fu_\l \to \fg$.  According to~\eqref{eqn:richardson}, the object $M = \indLGl \d$ is isomorphic to $\pi_*\uF[\dim G \times^{P_\l} \fu_\l]$.  We begin by recalling that $M$ has cohomology concentrated in even degrees.  Indeed, by base change, the stalk of $M$ at a point $x \in \cO_\mu$ is (an even shift of) the cohomology of the fiber $\pi^{-1}(x)$.  By~\cite[Theorem~2.5]{bo}, $\pi^{-1}(x)$ has an affine paving, so $H^i(\pi^{-1}(x),\bF)$ vanishes for $i$ odd.

Next, we claim that the group
\begin{equation}\label{eqn:tilt-dev}
\Hom(\mathcal{H}^i(i_\mu^*M)[-i], \uF_{\cO_\mu}[\dim \cO_\mu+1])) \cong H_G^{i+\dim \cO_\mu +1}(\cO_\mu, \mathcal{H}^i(i_\mu^*M)^\vee)
\end{equation}
vanishes for all $i$.  (Here, $\mathcal{H}^i(i_\mu^*M)^\vee$ denotes the dual local system to $\mathcal{H}^i(i_\mu^*M)$.)  This is trivial if $i$ is odd, by the preceding paragraph.  On the other hand, if $i$ is even, recall that the $G$-equivariant cohomology of a homogeneous $G$-space with coefficients in any local system vanishes in odd degrees.

Finally, in any highest weight category, an object $X$ is tilting if and only if both $\Ext^1(\Delta_i,X)$ and $\Ext^1(X,\nabla_i)$ vanish for all $i$ (see~\cite[Lemma~4]{bez}).  In $\Perv_G(\cN_G,\bF)$, the group
\[
\Ext^1(M,\nabla_\mu)=\Hom(i_\mu^* M, \ul{\bF}_{\cO_\mu}[\dim \cO_\mu+1]).
\]
vanishes, by~\eqref{eqn:tilt-dev} and d\'evissage, and then $\Ext^1(\Delta_\mu,M) = 0$ by Verdier duality.

(2) It is well known that the image of the map $\pi: G \times^{P_\l} \fu_\l \to \fg$ is $\overline{\cO_{\l^\vee}}$, and that it is an isomorphism over $\cO_{\l^\vee}$.  We have already seen that $\indLGl \d$ is tilting, which implies that it is a direct sum of indecomposable tilting objects $\cT_\mu$.  Considering supports completes the argument.
\end{proof}

\begin{rmk}
In the preceding proof, we essentially repeated the argument of~\cite[\S 4.3.1]{jmw} showing that $\indLGl\d$ is a \emph{parity sheaf}.  Thus, this lemma shows that all parity sheaves on $\cN_G$ are tilting perverse sheaves.
\end{rmk}

We are now ready to prove Theorem \ref{thm:ringel}.

\begin{proof}
We proceed by induction with respect to the partial order on partitions.  For $\l = (n)$, we saw in Corollary~\ref{cor:proj} that $\cR (\cT_{(1^n)}) = \cR \d \cong \cP_{(n)}$.  Suppose now that we have shown that for all $\mu > \l$, $\cR (\cT_{\mu^\vee}) \cong \cP_{\mu}$.  We wish to check that $\cR (\cT_{\l^\vee}) \cong \cP_{\l}$.

As $\cR$ is an equivalence and $\cT_{\l^\vee}$ is indecomposable, $\cR(\cT_{\l^\vee})$ is also indecomposable.  By Lemma~\ref{lem:tilt}\eqref{it:tilt-supp}, $\cT_{\l^\vee}$ is a summand of $\indLGl \d$, so $\cR(\cT_{\l^\vee})$ is a summand of $\cR(\indLGl \d)$.  The latter is projective by Corollary~\ref{cor:proj},  so $\cR(\cT_{\l^\vee}) \cong \cP_\nu$ for some partition $\nu$.

It remains to show that $\cR (\indLGl \d)$ is a sum of indecomposable projectives $\cP_\mu$ for $\mu \geq \l$.  Equivalently, we must show that if $\mu \not\geq \l$, then there is no nonzero map $\indLGl \cR(\d) \to \IC_\mu$.  This follows from Lemmas~\ref{lem:ind-std} and~\ref{prop:const} together with~\eqref{eqn:coind-orbit}.
\end{proof}

As a corollary, we recover Donkin's result that the Schur algebra $S_\bF(n,n)$ is Ringel self-dual:

\begin{cor}\label{cor:donkin}
We have
$\End( \bigoplus_\lambda \cT_\lambda ) \cong \End( \bigoplus_\lambda \cP_\lambda)$.\qed
\end{cor}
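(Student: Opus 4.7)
The plan is to deduce this directly from Theorems~\ref{thm:main} and~\ref{thm:ringel}. Indeed, the statement already carries $\qed$ in the body of the text, which signals that only a brief formal argument remains; there is no substantive obstacle. The entire strategy is: observe that the indexing set is symmetric under conjugation of partitions, transport the left-hand side through the autoequivalence $\cR$, and then use Theorem~\ref{thm:ringel} to identify the result.

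More precisely, first I would note that $\lambda \mapsto \lambda^\vee$ is an involution on the set of partitions of $n$, so that the trivial re-indexing
\[
\bigoplus_\lambda \cT_\lambda \;\cong\; \bigoplus_\lambda \cT_{\lambda^\vee}
\]
holds in $\Perv_G(\cN_G,\bF)$. Next, I would apply the autoequivalence $\cR$ from Theorem~\ref{thm:main} to this direct sum; by additivity of $\cR$ together with Theorem~\ref{thm:ringel},
\[
\cR\Big( \bigoplus_\lambda \cT_{\lambda^\vee} \Big) \;\cong\; \bigoplus_\lambda \cR(\cT_{\lambda^\vee}) \;\cong\; \bigoplus_\lambda \cP_\lambda.
\]

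Finally, because $\cR$ is a functor which is moreover an equivalence, its action on $\Hom$ groups is an isomorphism compatible with composition. Applied to the endomorphism algebra of $\bigoplus_\lambda \cT_{\lambda^\vee}$, this yields a ring isomorphism
\[
\End\Big( \bigoplus_\lambda \cT_\lambda \Big) \;\cong\; \End\Big( \bigoplus_\lambda \cT_{\lambda^\vee} \Big) \;\simto\; \End\Big( \bigoplus_\lambda \cP_\lambda \Big),
\]
which is the desired statement. The only subtlety worth mentioning is that one needs a ring isomorphism, not merely a linear isomorphism of underlying vector spaces, but this is automatic from the functoriality of $\cR$.
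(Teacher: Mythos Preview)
Your argument is correct and is precisely the intended one: the paper marks the corollary with \qed\ because it follows immediately from Theorems~\ref{thm:main} and~\ref{thm:ringel} via the re-indexing $\lambda \mapsto \lambda^\vee$ and the fact that an equivalence induces a ring isomorphism on endomorphism algebras. There is nothing to add.
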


Next, let $\Tilt_G(\cN_G,\bF)$ and $\Proj_G(\cN_G,\bF)$ denote the additive categories of tilting and projective perverse sheaves, respectively, on $\cN_G$.  Another immediate consequence of Theorem~\ref{thm:ringel} is that $\cR$ induces an equivalence of additive categories
\begin{equation}\label{eqn:gringel-add}
H_\cR: \Tilt_G(\cN_G,\bF) \simto \Proj_G(\cN_G,\bF).
\end{equation}
This equivalence plays the role of~\eqref{eqn:ringel-add}.  Note that even though the isomorphism of Corollary~\ref{cor:donkin} depends on noncanonical choices (because the proof of Theorem~\ref{thm:ringel} does), the functor $H_\cR$ does not.  This functor extends to an equivalence of homotopy categories, and hence, using~\eqref{eqn:tiltproj}, to an equivalence of derived categories
\[
H_\cR: \Db\Perv_G(\cN_G,\bF) \simto \Db\Perv_G(\cN_G,\bF).
\]

We conclude this section by explaining the way in which this autoequivalence is compatible with $\cR$.  Recall that even though $\Db\Perv_G(\cN_G,\bF)$ and $\cD_G(\cN_G,\bF)$ are not equivalent, there is a $t$-exact functor
\[
\real: \Db\Perv_G(\cN_G,\bF) \to \cD_G(\cN_G,\bF)
\]
whose restriction to $\Perv_G(\cN_G,\bF)$ is the identity functor.  

\begin{prop}\label{prop:ringel-compat}
Let $G = GL_n$.  The following diagram commutes:
\begin{equation}\label{eqn:ringel-compat}
\vcenter{\xymatrix{
\Db\Perv_G(\cN_G,\bF) \ar[rr]^-{\real} \ar[d]_{H_\cR} &&
  \cD_G(\cN_G,\bF) \ar[d]^{\cR} \\
\Db\Perv_G(\cN_G,\bF) \ar[rr]_-{\real} &&
  \cD_G(\cN_G,\bF)}}
\end{equation}
\end{prop}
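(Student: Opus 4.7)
The plan is to reduce the claim to agreement on tilting objects and then propagate it via the equivalence $\Kb\Tilt_G(\cN_G,\bF) \simto \Db\Perv_G(\cN_G,\bF)$ of~\eqref{eqn:tiltproj}. Both $\real \circ H_\cR$ and $\cR \circ \real$ are triangulated functors from $\Db\Perv_G(\cN_G,\bF)$ to $\cD_G(\cN_G,\bF)$: $\real$ and $H_\cR$ are triangulated by construction, and $\cR$ is a triangulated autoequivalence by Theorem~\ref{thm:main}.

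First I would compare the two compositions on a tilting perverse sheaf $\cT \in \Tilt_G(\cN_G,\bF)$. Since $\real$ is the identity on $\Perv_G(\cN_G,\bF)$, we have $\cR(\real(\cT)) = \cR(\cT)$. On the other hand, the additive equivalence $H_\cR$ of~\eqref{eqn:gringel-add} is by definition the restriction of $\cR$ to $\Tilt_G(\cN_G,\bF)$, and by Theorem~\ref{thm:ringel} (or Corollary~\ref{cor:proj}) the object $\cR(\cT)$ is a projective perverse sheaf; hence $H_\cR(\cT) = \cR(\cT) \in \Perv_G(\cN_G,\bF)$ and $\real(H_\cR(\cT)) = \cR(\cT)$ as well. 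The two functors therefore agree on the generating subcategory $\Tilt_G(\cN_G,\bF)$ via the identity, and naturally in morphisms of tiltings.

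Next I would extend this to all of $\Db\Perv_G(\cN_G,\bF)$ by representing an arbitrary object $X$, via~\eqref{eqn:tiltproj}, by a bounded complex $\cT^\bullet$ of tiltings. The object $\real(X)$ is built from the $\cT^i$ by the iterated cones and shifts encoded in the equivalence $\Kb\Tilt \simeq \Db\Perv$, and $H_\cR(X)$ is represented by the termwise complex $\cR(\cT^\bullet) \in \Kb\Proj_G(\cN_G,\bF)$. Because $\cR$ is triangulated, it commutes with cones and shifts in $\cD_G(\cN_G,\bF)$, so applying $\cR$ to the construction of $\real(X)$ produces the construction of $\real(H_\cR(X))$, yielding an isomorphism $\cR(\real(X)) \simto \real(H_\cR(X))$.

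The main obstacle is upgrading this objectwise isomorphism to a natural transformation of functors. I would address this by the universal property of the realization functor: a triangulated functor out of $\Db\Perv_G(\cN_G,\bF)$ is determined up to canonical isomorphism by its restriction to $\Tilt_G(\cN_G,\bF)$ under $\Kb\Tilt \simeq \Db\Perv$. Since $\real \circ H_\cR$ and $\cR \circ \real$ are triangulated extensions of the same additive functor $\cT \mapsto \cR(\cT)$ on $\Tilt_G(\cN_G,\bF)$, they are canonically isomorphic, which gives the commutativity of~\eqref{eqn:ringel-compat}.
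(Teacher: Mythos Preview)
Your identification of the core difficulty---upgrading the objectwise agreement to a natural isomorphism---is exactly right, and steps~1--2 are fine. The gap is in step~3: the ``universal property'' you invoke does not exist at the level of triangulated categories. Two triangulated functors $\Kb\Tilt_G(\cN_G,\bF) \to \cD_G(\cN_G,\bF)$ that restrict to naturally isomorphic additive functors on $\Tilt_G(\cN_G,\bF)$ need not be naturally isomorphic, because cones in a triangulated category are not functorial. The iterated-cone construction you sketch in step~2 produces an isomorphism $\cR(\real(X)) \cong \real(H_\cR(X))$ for each $X$, but that isomorphism depends on choices (of the fill-in maps at each stage), and there is no mechanism at the bare triangulated level to make those choices coherently across all objects and morphisms.

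The paper resolves this by working in an enhancement: the filtered equivariant derived category $\tilde\cD_G(\cN_G,\bF)$, which carries a $t$-structure with heart $\Cb\Perv_G(\cN_G,\bF)$ and a filtration-forgetting functor $\omega$ to $\cD_G(\cN_G,\bF)$ inducing $\real$. Because $\cR$ is built from standard sheaf operations, it lifts to a functor $\tilde\cR$ on $\tilde\cD_G(\cN_G,\bF)$ compatible with $\omega$ and $\cR$. Restricting $\tilde\cR$ to $\Cb\Tilt_G(\cN_G,\bF)$ gives an honest functor of chain complexes, which one checks (via baric truncation) agrees with termwise application of the additive equivalence~\eqref{eqn:gringel-add}. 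Passing to homotopy categories and invoking~\eqref{eqn:tiltproj} then yields the desired natural isomorphism. The filtered lift supplies precisely the coherence data that the triangulated structure alone cannot.
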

\begin{proof}
We briefly review the construction of the realization functor.  Let $\tilde\cD_G(\cN_G,\bF)$ be the ``filtered equivariant derived category'' of $\cN$.  (This is defined using the methods of~\cite{bl}; it is related to the filtered derived category considered in~\cite[\S 3.1]{bbd} in the same way that $\cD_G(\cN_G,\bF)$ is related to the ordinary derived category of $\cN$.)  This category has a $t$-structure whose heart is equivalent to the abelian category $\Cb\Perv_G(\cN_G,\bF)$ of bounded chain complexes of objects in $\Perv_G(\cN_G,\bF)$.  There is also a ``filtration-forgetting'' functor $\omega: \tilde\cD_G(\cN_G,\bF) \to \cD_G(\cN_G,\bF)$.  The restriction
\[
\omega|_{\Cb\Perv_G(\cN_G,\bF)}: \Cb\Perv_G(\cN_G,\bF) \to \cD_G(\cN_G,\bF)
\]
factors through the natural functor $\Cb\Perv_G(\cN_G,\bF) \to \Db\Perv_G(\cN_G,\bF)$, and thus gives rise to the realization functor.

The usual sheaf functors all lift to the setting of filtered derived categories.  In particular, there is a functor $\tilde\cR: \tilde\cD_G(\cN_G,\bF) \to \tilde\cD_G(\cN_G,\bF)$ such that $\real \circ \tilde\cR \cong \cR \circ \real$.  In general, $\tilde\cR$ does not preserve the category $\Cb\Perv_G(\cN_G,\bF)$, but it does behave well on the additive subcategories 
\[
\Cb\Tilt_G(\cN_G,\bF)
\qquad\text{and}\qquad
\Cb\Proj_G(\cN_G,\bF)
\]
consisting of chain complexes of tilting and projective sheaves, respectively.  In particular, it follows from Theorem~\ref{thm:ringel} that we have a commutative diagram
\[
\xymatrix{
\Cb\Tilt_G(\cN_G,\bF) \ar[rr]^-{\omega} \ar[d]_{\tilde\cR|_{\Cb\Tilt_G(\cN_G,\bF)}} &&
  \cD_G(\cN_G,\bF) \ar[d]^{\cR} \\
\Cb\Proj_G(\cN_G,\bF) \ar[rr]_-{\omega} &&
  \cD_G(\cN_G,\bF)}
\]
The functor $\tilde\cR|_{\Cb\Tilt_G(\cN_G,\bF)}$ must coincide with the one induced by~\eqref{eqn:gringel-add}.  (To see this, recall that the terms and differentials of a complex in $\Cb\Perv_G(\cN_G,\bF) \subset \tilde\cD_G(\cN_G,\bF)$ can be described using ``baric truncation functors'' on $\tilde\cD_G(\cN_G,\bF)$, and then observe that $\tilde\cR$ commutes with these functors.)  Since null-homotopic maps in $\Cb\Perv_G(\cN_G,\bF)$ are sent to $0$ by $\omega$, this commutative diagram gives rise to a similar one in which the categories in the left-hand column are replaced by the homotopy categories $\Kb\Tilt_G(\cN_G,\bF)$ and $\Kb\Proj_G(\cN_G,\bF)$.  The desired diagram then follows from~\eqref{eqn:tiltproj}.
\end{proof}

\section{Behavior of $\cR$ in other groups}
\label{sect:behavior}

\subsection{Characteristic zero}

In this section, we once again allow $G$ to be an arbitrary reductive group, but we assume that $\bF = \C$.  In this case, there is a classification of simple perverse sheaves on $\cN_G$ in terms of representations of certain finite Coxeter groups, which we will now briefly review.  For a Levi subgroup $L \subset G$, let $\cW_L = N_G(L)/L$, where $N_G(L)$ denotes the normalizer of $L$ in $G$.  The \emph{generalized Springer correspondence}~\cite{lus:icc} is a bijection
\begin{equation}\label{eqn:genspringer}
\tilde\nu:
\coprod_{\substack{\{(L,S)\} \\ S \in \Perv_L(\cN_L,\C) \\ \text{$S$ simple, cuspidal}}} \Irr(\cW_L) \simto
\left\{
\begin{array}{c}
\text{isomorphism classes of simple} \\
\text{objects in $\Perv_G(\cN_G,\C)$}
\end{array}
\right\},
\end{equation}
where the pairs $(L,S)$ indexing the disjoint union are considered only up to $G$-conjugacy.  The pair $(T,S_0)$ (where $S_0$ is the unique simple perverse sheaf on the point $\cN_T$) always occurs in the left-hand member.  The restriction of $\nu$ to $\Irr(\cW_T)$ is the ``ordinary'' Springer correspondence.

Given a pair $(L,S)$ as above and an element $\chi \in \Irr(\cW_L)$, choose a representative simple perverse sheaf $\IC_{L,S,\chi}$.  The following statement, which describes the behavior of $\cR$ in terms of the classification~\eqref{eqn:genspringer}, is a restatement of~\cite[\S5.5]{lus:usir}.  For perverse sheaves appearing in the ordinary Springer correspondence, this statement was known much earlier; see~\cite[Theorem~5.2]{hk} and~\cite[Proposition~17.7]{shoji}.

\begin{prop}\label{prop:genspring}
For each pair $(L,S)$ as in~\eqref{eqn:genspringer} and each $\chi \in \Irr(\cW_L)$, we have $\cR(\IC_{L,S,\chi}) \cong \IC_{L,S,\chi \otimes \e}[\dim \cN_L - \dim [\fl,\fl]]$. \qed
\end{prop}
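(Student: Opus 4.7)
The plan is to reduce this to Lusztig's calculation in~\cite[\S 5.5]{lus:usir}, which describes precisely how the full Fourier--Sato transform $\bT_\fg$ acts on the simple perverse sheaves on $\fg$ obtained from the generalized Springer correspondence over the entire Lie algebra; the proposition is essentially a translation of that calculation from $\fg$ down to $\cN_G$.

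First, I would recall the ``ambient'' version of~\eqref{eqn:genspringer}. For a cuspidal pair $(L,S)$ with $S$ supported on a nilpotent orbit $C \subset \cN_L$, one extends $S$ to a simple perverse sheaf $\tilde S$ on $\fl$ whose support is $\overline{\fz(\fl) + C}$; the induced perverse sheaf $\indLG \tilde S$ on $\fg$ is semisimple with endomorphism algebra $\bF[\cW_L]$, and its simple summands $\IC^\fg_{L,S,\chi}$ for $\chi\in\Irr(\cW_L)$ are the ``ambient'' counterparts of the $\IC_{L,S,\chi}$. Tracing through supports and dimensions shows that $\IC_{L,S,\chi}$ is obtained from $i^*\IC^\fg_{L,S,\chi}$ by a cohomological shift whose magnitude depends on $\dim\fz(\fl)$.

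Next, I would invoke Lusztig's identity, which says that $\bT_\fg$ maps each $\IC^\fg_{L,S,\chi}$ to $\IC^\fg_{L,S,\chi\otimes\e}$, up to a shift depending only on $L$. The three ingredients underlying this identity are: (i)~$\bT_\fg$ commutes with parabolic induction (Lemma~\ref{lem:r-basic}), which is the analogue on $\fg$ of the commutation already used for $\cR$; (ii)~on the Levi one has $\bT_\fl(\tilde S) \cong \tilde S$ up to an explicit shift, since cuspidals are essentially Fourier self-dual after one accounts for the contribution of $\fz(\fl)$; and (iii)~the two identifications of $\End(\indLG\tilde S)$ with $\bF[\cW_L]$ (before and after applying $\bT_\fg$) differ by the sign character of $\cW_L$, which is the source of the $\chi \mapsto \chi\otimes\e$ twist.

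Finally, I would apply $i^*[\dim\cN_G - \dim\fg]$ to Lusztig's identity on $\fg$ and unwind the two shifts from the first and second steps. Using $\dim\fl = \dim\fz(\fl) + \dim[\fl,\fl]$ together with the identity $\dim\cN_L = \dim[\fl,\fl] - \mathrm{rank}[\fl,\fl]$, the combined shift collapses to $[\dim \cN_L - \dim[\fl,\fl]]$, as in the statement. The main obstacle is precisely this bookkeeping: Lusztig in~\cite{lus:ftsla,lus:usir} uses a Fourier transform normalization that differs from the Kashiwara--Schapira convention adopted here, so one has to reconcile the dimension factors attached to $\fz(\fl)$, $[\fl,\fl]$, and $G/P$ carefully before the shifts line up. Aside from this bookkeeping, essentially no new argument is needed beyond what Lusztig provides.
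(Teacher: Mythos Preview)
Your approach is correct and is precisely what the paper intends: the paper gives no proof at all beyond the sentence ``The following statement\ldots is a restatement of~\cite[\S5.5]{lus:usir}'' and the terminal \qed, so the entire content is the reduction to Lusztig's computation on $\fg$ followed by applying $i^*[\dim\cN_G-\dim\fg]$ and bookkeeping the shifts, exactly as you outline. Your sketch actually supplies more detail than the paper does; the only part to be careful about is, as you note, reconciling Lusztig's normalization with the one used here.
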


Let us know restrict our attention to the Serre subcategory $\Spr \subset \Perv_G(\cN_G,\C)$ generated by the simple perverse sheaves in the ordinary Springer correspondence.  Note that $\dim \cN_T = \dim [\ft,\ft] = 0$, so this category is preserved by $\cR$.  On $\Spr$, Proposition~\ref{prop:genspring} can be understood in a categorical way, as a description of the autoequivalence of $\Spr$ induced by $\cR$.  One might seek to generalize this to the triangulated category $\cD_\Spr \subset \cD_G(\cN_G,\C)$ generated by $\Spr$.

In recent work~\cite{rider}, Rider has shown how to describe $\cD_\Spr$ in a way that extends the ordinary Springer correspondence. Consider the smash product $E = \C[W] \mathbin{\#} \C[\ft^*]$ as a graded ring concentrated in nonpositive even degrees.  More precisely, we regard $E$ as a differential-graded ring with zero differential.  Rider's result states that there is an equivalence of triangulated categories
\begin{equation}\label{eqn:rider}
\Theta: \cD_\Spr \simto DG(E),
\end{equation}
where $DG(E)$ denotes the derived category of finitely-generated dg-modules over $E$.  Note that it makes sense to take the tensor product of a dg-$E$-module with a finite-dimensional $W$-representation.  In particular, tensor product with $\e$ is a well-defined autoequivalence of $DG(E)$.  The following conjecture is a derived version of the restriction to $\Spr$ of Proposition~\ref{prop:genspring}.

\begin{conj}\label{conj:rider}
There is an isomorphism of functors $\cR|_{\cD_\Spr} \cong \Theta^{-1} \circ ({-} \otimes \e) \circ \Theta$.
\end{conj}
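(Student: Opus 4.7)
My plan is to transport both functors to $DG(E)$ via $\Theta$ and identify the image of $\cR|_{\cD_\Spr}$ as pullback of dg-modules along the automorphism $\alpha$ of $E$ determined by $\alpha|_{\C[\ft^*]} = \mathrm{id}$ and $\alpha(w) = \e(w)\, w$; this pullback coincides with the functor $(-) \otimes \e$.

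I would realize $\Theta$ via the full Springer sheaf $\cF \in \cD_\Spr$, a compact generator whose graded endomorphism algebra carries a Borho--MacPherson $\C[W]$-action and an $H^\bullet_G(\mathrm{pt}) \cong \C[\ft^*]$-action, yielding an isomorphism $\End^\bullet_G(\cF) \cong E$. First I would show that $\cR(\cF) \cong \cF$ in $\cD_\Spr$: writing $\cF \cong \bigoplus_{\chi \in \Irr(\cW_T)} V_\chi \otimes \IC_{T,S_0,\chi}$ and applying Proposition~\ref{prop:genspring} with $L = T$ (where the shift vanishes) gives $\cR(\cF) \cong \bigoplus_\chi V_\chi \otimes \IC_{T,S_0,\chi\otimes\e}$, and the involution $\chi \mapsto \chi \otimes \e$ together with $\dim V_\chi = \dim V_{\chi\otimes\e}$ supplies the reshuffling. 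Next I would identify the induced automorphism of $E$. On the $\C[\ft^*]$-factor it must be the identity, since every functor on $\cD_G(\cN_G,\C)$ is automatically $H^\bullet_G(\mathrm{pt})$-linear on morphism spaces by naturality. On the $\C[W]$-factor I would invoke the Hotta--Kashiwara--Brylinski theorem \cite{hk}: Fourier--Sato transform intertwines the Springer $W$-action on $\cF$ with its sign-twist, so the induced automorphism is $w \mapsto \e(w)\, w$.

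The principal obstacle is the last step: extracting from Proposition~\ref{prop:genspring} and its antecedents a single isomorphism $\cR(\cF) \simto \cF$ that is simultaneously $(W,\e)$-equivariant, rather than merely summand-by-summand at the level of the simple constituents. I would attempt this by running Lusztig's inductive proof of Proposition~\ref{prop:genspring} in parallel with the Borho--MacPherson construction of the $W$-action via parabolic induction from the torus, using the compatibility of $\cR$ with parabolic induction (Lemma~\ref{lem:r-basic} and \cite[Lemma~4.2]{mirk}); the base case at the torus is the source of the sign character, and the rigidity of the multiplicity spaces $V_\chi$ then propagates $(W,\e)$-equivariance to all of $\cF$. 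Once this is in hand, the standard Morita dictionary upgrades the isomorphism on the compact generator to the asserted isomorphism of functors on $\cD_\Spr$; the shift conventions built into the normalization of $\Theta$ in \cite{rider} would need to be checked at the very end to ensure no stray cohomological shift appears.
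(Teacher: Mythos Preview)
The statement you are addressing is labeled a \emph{Conjecture} in the paper and is not proved there; the paper offers no argument beyond the subsequent Remark, which (i) observes that the restriction to the additive subcategory of semisimple objects follows from \cite[Theorem~3.8]{em}, and (ii) speculates that a mixed analogue might be tractable via the Orlov-category methods of~\cite{ar}. There is therefore no proof in the paper to compare your proposal against.

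As for the proposal on its own merits: your strategy---transport both functors to $DG(E)$ via the compact generator $\cF$ and identify the induced automorphism of $E$---is the natural one, and you are right that Proposition~\ref{prop:genspring} gives an abstract isomorphism $\cR(\cF)\cong\cF$. You also correctly locate the genuine difficulty: one needs a \emph{specific} isomorphism $\cR(\cF)\simto\cF$ under which the induced automorphism of $\End^\bullet_G(\cF)\cong E$ is exactly $w\mapsto\e(w)w$ on $\C[W]$ and the identity on $\C[\ft^*]$. Your claim that $H^\bullet_G(\mathrm{pt})$-linearity is automatic ``by naturality'' is too quick: naturality gives $H^\bullet_G(\mathrm{pt})$-linearity of the \emph{functor} on Hom-spaces, but does not by itself force a chosen isomorphism $\cR(\cF)\cong\cF$ to intertwine the two $\C[\ft^*]$-actions---an arbitrary isomorphism of semisimple objects need not. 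Likewise, invoking \cite{hk} for the $W$-side is essentially the content of the paper's Remark~(1), which only handles the semisimple additive subcategory, not the full triangulated $\cD_\Spr$. Your sketch of propagating $(W,\e)$-equivariance through the inductive construction via Lemma~\ref{lem:r-basic} is plausible but is precisely the missing step the authors decline to supply; absent that, the ``Morita dictionary'' does not upgrade to an isomorphism of functors. In short, your outline is a reasonable plan of attack on an open conjecture, and the gap you name is the real one, but it remains a gap.
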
 

\begin{rmk}
\begin{enumerate}
\item On the additive subcategory of $\cD_\Spr$ consisting of semisimple objects (i.e., direct sums of shifts of simple perverse sheaves), this statement can be deduced from~\cite[Theorem~3.8]{em}.

\item The analogous derived-category statement in the setting of mixed perverse sheaves on a Lie algebra over a finite field can likely be proved using the method of ``Orlov categories'' introduced in~\cite{ar}.
\end{enumerate}
\end{rmk}

If~\eqref{eqn:rider} has analogues for the other ``blocks'' of the generalized Springer correspondence, one could formulate corresponding analogues of Conjecture~\ref{conj:rider}, which together would give a complete description of $\cR$ in characteristic zero.

\subsection{Examples in rank two}  

We conclude by presenting the results of some calculations in types $B_2$ and $G_2$, using the language of parity sheaves~\cite{jmw}.  Given an orbit $\cO_\alpha \subset \cN_G$, we will denote by $\cE_\alpha$ an indecomposable parity extension of $\uF_{\cO_\alpha}[\dim \cO_\alpha]$.  Similarly, if $\cL$ is some nontrivial local system on some nilpotent orbit $\cO$, then $\cE_\cL$ will denote an indecomposable parity extension of $\cL[\dim \cO]$.  

\begin{rmk}
Some care is required when working with parity sheaves on $\cN_G$ in small characteristics, because the parity condition~\cite[Equation~(2.2)]{jmw} can fail, and so the general uniqueness result for parity sheaves~\cite[Theorem~2.9]{jmw} is not necessarily available.  The notation above will be used for certain explicitly-constructed objects, so there is no risk of ambiguity.  Most of the parity sheaves we consider happen to be perverse; it is possible that a weaker form of~\cite[Theorem~2.9]{jmw} holds for such objects.
\end{rmk}

It can be shown by a general argument (cf.~the remarks following~\cite[Theorem~3.2.1]{bgs}) that $\Perv_G(\cN_G)$ has enough projectives.  We will let $P_\alpha$ (resp.~$P_\cL$) denote the projective cover of the simple perverse sheaf $\IC(\cO_\alpha,\uF)$ (resp.~$\IC(\cO,\cL)$ where $\cL$ is an irreducible local system on $\cO$).

We focus on computing $\cR$ on direct summands of Richardson sheaves, and especially on the object $\ind_T^G \delta$, known as the \emph{Springer sheaf}.

\subsubsection{Type $B_2$}  

For $G=SO_5$, the nilpotent cone has four orbits: the regular~$\cO_{\reg}$, the subregular~$\cO_{\sub}$, the minimal~$\cO_{\min}$, and the zero orbit~$\cO_0$.  The equivariant fundamental groups of the orbit are trivial, except for the subregular, which has equivariant fundamental group $\Z/2$.  Thus, the category of perverse sheaves $\Perv_G(\cN_G,\bF)$ has four simple objects when the characteristic of $\bF$ is equal to 2, and has five simple objects in all other characteristics.

Let $\e$ denote the sign representation of $\Z/2$ and $\tilde{\mathbf{1}}$ denote the 2-dimensional projective cover of the trivial representation of $\Z/2$ in characteristic 2.  We will use these to label the corresponding $G$-equivariant local systems on the subregular orbit.

The Weyl group $W$ is the dihedral group of order 8 and has 5 conjugacy classes, only one of which is 2-regular.

\medskip

It follows that in characteristic greater than 2, the Springer sheaf contains each simple perverse sheaf as a direct summand.  The $\IC$-sheaves are the parity sheaves $\cE_{\reg}$, $\cE_{\sub}$, $\cE_{\e}$, $\cE_{\min}$, $\cE_0$, and the category $\Perv_G(\cN_G,\bF)$ is semisimple.  In this case, the functor $\cR$ will permute the $\IC$-sheaves as in characteristic zero.

\medskip

Let $\bF$ have characteristic 2.  In this case the regular representation of $W$ is indecomposable and therefore the Springer sheaf is too.  Similarly, one can show that the Richardson sheaves from the two nontrivial Levi subgroups are irreducible and are parity extensions of the trivial local system and its projective cover $\tilde{\mathbf{1}}$ on the subregular orbit.  Thus, as summands of the various Richardson sheaves (including the skyscraper sheaf $\delta_\fg$), we obtain four indecomposable parity complexes $\cE_{\reg}$, $\cE_{\sub}$, $\cE_{\tilde{\mathbf{1}}}$, $\cE_0$.  By Theorem~\ref{thm:main} and Corollary~\ref{cor:proj}, the functor $\cR$ takes each of these to the four indecomposable projective objects in $\Perv_G(\cN_G,\bF)$.  Moreover, one can show that the correspondence is as follows:
\begin{align*}
\cR(\cE_{\reg}) &\cong \cE_{\reg} \cong P_0 \\
\cR(\cE_{\sub}) &\cong P_{\sub} \\
\cR(\cE_{\tilde{\mathbf{1}}}) &\cong P_{\min} \\
\cR(\cE_0) &\cong \Delta_{\reg} \cong P_{\reg}
\end{align*}

\subsubsection{Type $G_2$}

For $G$ of type $G_2$, the nilpotent cone has five orbits: the regular $\cO_{\reg}$, the subregular $\cO_{\sub}$, the middle $\cO_{\midd}$, the minimal $\cO_{\min}$,  and the zero orbit $\cO_0$.  The equivariant fundamental groups of the orbit are trivial, except for the subregular, which has equivariant fundamental group $S_3$.  Thus, the category of perverse sheaves $\Perv_G(\cN_G,\bF)$ has six simple objects when the characteristic of $\bF$ is equal to 2 or 3, and has seven simple objects in all other characteristics.

Let $\e$ denote the sign representation of $S_3$ and $\sigma$ the simple 2-dimensional representation in characteristic not equal to 3.  Let $\tilde{\mathbf{1}}$ denote the 3-dimensional projective cover of the trivial representation of $S_3$ in characteristic 2.  As before, we will use these to label the corresponding $G$-equivariant local systems on the subregular orbit.

The Weyl group $W$ is the dihedral group of order 12 and has 6 conjugacy classes, 4 of which are 3-regular and 2 of which are 2-regular.

\medskip

In characteristic not equal to 2 or 3, the Springer sheaf is semisimple and every $\IC$-sheaf appears with the exception of $\IC(\cO_{\sub},\e)$, which is cuspidal and clean.  Then the category of perverse sheaves is semisimple and the functor $\cR$ acts on the $\IC$-sheaves just as in characteristic zero.

\medskip

When the characteristic of $\bF$ is 2, we find that the Richardson sheaves break up as follows:
\begin{align*}
\ind^G_T \d &\cong \cE_{\reg} \oplus \cE_{\sub}^{\oplus 2}, \\
\ind^G_{L_1} \d &\cong \cE_{\sub} \oplus \cE_{\sigma}, \\
\ind^G_{L_2} \d &\cong \cE_{\sub} \oplus \cE_{\min}
\end{align*}
and of course $\ind^G_G \d \cong \d$.  In this way we have obtained five indecomposable summands which are therefore taken to five indecomposable projective perverse sheaves.  One can show that the correspondence is as follows:
\begin{align*}
\cR(\cE_{\reg}) &\cong \cE_{\reg} \cong P_0 \\
\cR(\cE_{\sigma}) &\cong P_{\min} \\
\cR(\cE_{\sub}) &\cong \cE_{\sub} \cong P_{\midd} \\
\cR(\cE_{\min}) &\cong P_\sigma \\
\cR(\cE_0) &\cong \Delta_{\reg} \cong P_{\reg}
\end{align*}
Thus the only projective cover not obtained in this way is $P_{\sub}$.

\medskip

When the characteristic of $\bF$ is 3, we find that the Richardson sheaves break up as follows:
\begin{align*}
\ind^G_T \d &\cong \cE_{\reg} \oplus \cE_{\sub} \oplus \cE_{\tilde{\mathbf{1}}} \oplus \cE_{\midd}, \\
\ind^G_{L_1} \d &\cong \cE_{\tilde{\mathbf{1}}} \oplus \cE_{\midd}, \\
\ind^G_{L_2} \d &\cong \cE_{\sub} \oplus \cE_{\midd}
\end{align*}
and of course $\ind^G_G \d \cong \d$.  In this way we again obtained 5 indecomposable summands which are therefore taken to 5 indecomposable projective perverse sheaves.  One can show that the correspondence is as follows:
\begin{align*}
\cR(\cE_{\reg}) &\cong \cE_{\midd} \cong P_0 \\
\cR(\cE_{\tilde{\mathbf{1}}}) &\cong \cE_{\sub} \cong P_{\min} \\
\cR(\cE_{\midd}) &\cong \cE_{\reg} \cong P_{\midd} \\
\cR(\cE_{\sub}) &\cong \cE_{\tilde{\mathbf{1}}} \cong P_{\sub} \\
\cR(\cE_0) &\cong \Delta_{\reg} \cong P_{\reg}
\end{align*}
Thus the only projective cover not obtained in this way is $P_{\e}$.  It is interesting to note that in both this case and the preceding one, the unique ``missing'' projective is the projective cover of the $\IC$-sheaf corresponding to the local system obtained by modular reduction of the unique cuspidal local system in characteristic zero.


\begin{thebibliography}{JMW}

\bibitem[AHR]{ahr}
P.~Achar, A.~Henderson, and S.~Riche, {\em Geometric Satake, Springer correspondence, and small representations II}, arXiv:1205.5089.

\bibitem[AR]{ar}
P.~Achar and S.~Riche, {\em Koszul duality and semisimplicity of Frobenius},
  Ann. Inst. Fourier, to appear.

\bibitem[BBD]{bbd}
A.~Be{\u\i}linson, J.~Bernstein, and P.~Deligne, {\em Faisceaux pervers},
  Analyse et topologie sur les espaces singuliers, I (Luminy, 1981),
  Ast\'erisque, vol. 100, Soc. Math. France, Paris, 1982, pp.~5--171.


\bibitem[BGS]{bgs}
A.~Beilinson, V.~Ginzburg, W.~Soergel,
{\em Koszul duality patterns in representation theory},
J. Amer. Math. Soc. {\bf 9} (1996), no. 2, 473Ð527. 


\bibitem[BL]{bl}
J.~Bernstein and V.~Lunts, {\em Equivariant sheaves and functors}, Lecture
  Notes in Mathematics, vol. 1578, Springer-Verlag, Berlin, 1994.

\bibitem[Bez]{bez}
R.~Bezrukavnikov, {\em Cohomology of tilting modules over quantum groups and
  {$t$}-structures on derived categories of coherent sheaves}, Invent. Math.
  {\bf 166} (2006), 327--357.


\bibitem[BO]{bo}
J.~Brundan and V.~Ostrik, {\em Cohomology of Spaltenstein varieties},
  Transform. Groups {\bf 16} (2011), 619--648.

\bibitem[Br]{bra}
T.~Braden, {\em Hyperbolic localization of intersection cohomology}, Transform.
  Groups {\bf 8} (2003), 209--216.

\bibitem[CPS]{cps} 
E.~Cline, B.~Parshall, L.~Scott,
{\em Finite-dimensional algebras and highest weight categories}, 
J. Reine Angew. Math. {\bf 391} (1988), 85Ð99. 

\bibitem[Do]{do}
S.~Donkin, {\em On tilting modules for algebraic groups}, Math. Z. {\bf 212} (1993), 39--60.

\bibitem[EM]{em}
S.~Evens and I.~Mirkovi\'{c}, {\em Fourier transform and the Iwahori--Matsumoto
  involution}, Duke Math. J. {\bf 86} (1997), 435--464.

\bibitem[Ha]{happel}
D.~Happel, {\em On the derived category of a finite-dimensional algebra},
  Comment. Math. Helv. {\bf 62} (1987), 339--389.

\bibitem[HK]{hk}
R.~Hotta and M.~Kashiwara, {\em The invariant holonomic system on a semisimple
  Lie algebra}, Invent. Math. {\bf 75} (1984), 327--358.

\bibitem[J]{juteau}
D.~Juteau, {\em Modular Springer correspondence and decomposition matrices},
  Ph.D. thesis, Universit\'e Paris 7, 2007.

\bibitem[JMW]{jmw}
D.~Juteau, C.~Mautner, and G.~Williamson, {\em Parity sheaves}, preprint,
  arXiv:0906.2994.

\bibitem[KS]{ks}
M.~Kashiwara and P.~Schapira, {\em Sheaves on manifolds}, Grundlehren der
  mathematischen Wissenschaften, vol. 292, Springer-Verlag, Berlin, 1994.

\bibitem[Ko]{kostant}
B.~Kostant, {\em Lie group representations on polynomial rings}, Amer. J. Math.
  {\bf 85} (1963), 327--404.

\bibitem[La]{laumon}
G.~Laumon, {\em Transformation de Fourier, constantes d'\'{e}quations
  fonctionelles et conjecture de Weil}, Inst. Hautes \'Etudes Sci. Publ. Math.
  (1987), no.~65, 131--210.


\bibitem[L1]{lus:icc}
G.~Lusztig, {\em Intersection cohomology complexes on a reductive group},
  Invent. Math. {\bf 75} (1984), 205--272.

\bibitem[L2]{lus:ftsla}
G.~Lusztig, {\em Fourier transforms on a semisimple Lie algebra over {${\bf
  F}_q$}}, Algebraic groups Utrecht 1986, Lecture Notes in Mathematics, vol.
  1271, Springer, Berlin, 1987, pp.~177--188.

\bibitem[L3]{lus:usir}
G.~Lusztig, {\em A unipotent support for irreducible representations}, Adv.
  Math. {\bf 94} (1992), 139--179.

\bibitem[M]{mau}
C.~Mautner, {\em Sheaf theoretic methods in modular representation theory},
  Ph.D. thesis, University of Texas at Austin, 2010.

\bibitem[Mi]{mirk}
I.~Mirkovi\'{c}, {\em Character sheaves on reductive Lie algebras}, Mosc. Math. J. {\bf
  4} (2004), 897--910, 981.

\bibitem[PS]{ps}
B.~Parshall and L.~Scott, {\em Derived categories, quasi-hereditary algebras,
  and algebraic groups}, Proceedings of the Ottawa-Moosonee Conference,
  Carleton University, 1988, pp.~1--105.

\bibitem[Rid]{rider}
L.~Rider, {\em Formality for the nilpotent cone and a derived Springer correspondence}, arXiv:1206.4343.

\bibitem[Rin]{ringel}
C.~M. Ringel, {\em The category of modules with good filtrations over a
  quasi-hereditary algebra has almost split sequences}, Math. Z. {\bf 208}
  (1991), 209--223.

\bibitem[Sh]{shoji}
T.~Shoji, {\em Geometry of orbits and Springer correspondence}, Orbites
  unipotentes et repr\'{e}sentations, I, Ast\'erisque, vol. 168, 1988,
  pp.~61--140.


\end{thebibliography}
\end{document}